\documentclass[12pt]{article}
\usepackage[margin=0.8in]{geometry}
\usepackage{amsmath,amssymb,amsthm,amsfonts,color,hyperref,graphicx,subfigure,authblk}
\usepackage{mathrsfs}
\usepackage{indentfirst}
\usepackage{comment}
\usepackage{cleveref}

\makeatletter
\def\blfootnote{\gdef\@thefnmark{}\@footnotetext}
\makeatother

\hypersetup{colorlinks,
linkcolor=blue} 

\theoremstyle{definition}
\newtheorem{thm}{Theorem}[section]
\newtheorem{lem}[thm]{Lemma}
\newtheorem{rem}[thm]{Remark}
\newtheorem{defi}[thm]{Definition}
\newtheorem{prop}[thm]{Proposition}
\newtheorem{cor}[thm]{Corollary}

\numberwithin{equation}{section}

\DeclareMathOperator{\Graph}{Graph}

\providecommand{\keywords}[1]
{
  \textbf{\text{Keywords: }} #1
}
\newcommand{\enabstractname}{Abstract}
\newenvironment{enabstract}{
    \par\small
    \noindent\mbox{}\hfill{\bfseries \enabstractname}\hfill\mbox{}\par
    \vskip 2.5ex}{\par\vskip 2.5ex} 

\title{Stability of geodesic-ray data, horofunctions, and rectifiability of fixed-shape slices in the Newtonian \(N\)-body problem}
\author[1]{Putian Yang \footnote{Email: PutianYang@outlook.com}}
\author[2]{Shiqing Zhang\thanks{Corresponding author: zhangshiqing@scu.edu.cn}}

\affil[1]{Chongqing University of Science and Technology}
\affil[2]{Sichuan University}

\date{}

\begin{document}
\maketitle

\blfootnote{\textup{2020} \textit{Mathematics Subject Classification}:
Primary 70F10, 70H20, 49L25; Secondary 37J50, 28A75.}

\begin{enabstract}
For the Newtonian \(N\)-body problem at nonnegative energy, we study solution sets selected by the Jacobi--Maupertuis variational principle and by 
the associated stationary Hamilton--Jacobi equation. We prove a compactness/stability theorem for classical initial data generating geodesic rays: 
limits in the ambient phase space remain collision-free, generate geodesic rays, and carry locally relatively compact normalized Busemann functions.
The limiting horofunction of normalized Busemann functions yields a viscosity solution of the limiting stationary equation. 
For a fixed collision-free hyperbolic limit shape \(a\), we also prove closedness of the corresponding slice of geodesic-ray data. Finally, after passing
 to the reduced configuration space \(X\), we show that this fixed-shape slice is countably \(d(N-1)\)-rectifiable in phase space and has Hausdorff dimension exactly \(d(N-1)\). Thus the paper combines phase-space compactness of calibrated minimizing motions with a geometric-measure description of a fixed-shape Hamilton--Jacobi calibrated slice.
\end{enabstract}

\keywords{Newtonian \(N\)-body problem, geodesic rays, horofunctions, Busemann functions, Hamilton--Jacobi equation, weak KAM theory, calibrated curves, rectifiability, Hausdorff dimension}

\section{Introduction}\label{Introduction}

Let $E=\mathbb{R}^d$ with $d\ge 2$, and consider the Newtonian $N$-body problem in the
configuration space $E^N$, endowed with the mass inner product
\[
\langle x,y\rangle=\sum_{i=1}^{N}m_i(x_i,y_i),
\qquad x=(x_1,\dots,x_N),\ y=(y_1,\dots,y_N)\in E^N,
\]
where $(\cdot,\cdot)$ is the standard inner product in $E$.
Taking suitable units such that the gravitation constant $G=1$, the Newtonian potential reads
\[
-U(x)= -\sum_{1\le i<j\le N}\frac{m_i m_j}{|x_i-x_j|},
\]
defined on the collision-free set
\[
\Omega=\{x\in E^N:\ x_i\neq x_j,\ 1\le i<j\le N\},
\qquad
\Delta=E^N\setminus\Omega.
\]
The classical phase space is \(T\Omega\simeq \Omega\times E^N\). Throughout the first part of the paper, the larger Euclidean space \(TE^N\simeq E^N\times E^N\) is used only as an ambient space when discussing compactness and closure properties. If \(x_0\in\Delta\), the pair \((x_0,v_0)\) is not a classical Cauchy datum for Newton's equations. In this case, saying that a motion starts from \(x_0\) means that it is a continuous half-entire curve \(x:[0,+\infty)\to E^N\) such that \(x(0)=x_0\), \(x(t)\in\Omega\) for all \(t>0\), and \(x|_{(0,+\infty)}\) is a classical solution of Newton's equations.

For each nonnegative energy level \(h\ge 0\), the Jacobi--Maupertuis distance \(\phi_h\) is obtained by minimizing the free-time action \(A_h\). Geodesic rays for \(\phi_h\) correspond to free-time minimizing motions of energy \(h\). This variational-geometric viewpoint is closely related to the stationary Hamilton--Jacobi equation
\begin{equation}\label{HJ equation intro}
H(x,D u)=h,
\qquad
H(x,p)=\frac12\|p\|_*^2-U(x),
\end{equation}
and to weak KAM theory for singular \(N\)-body Hamiltonians \cite{maderna_2012,10.4007/annals.2020.192.2.5,Burgos2022}. In particular, dominated functions are viscosity subsolutions, and curves calibrating such functions are action-minimizing characteristics.

The boundary-at-infinity objects used below are normalized horofunctions. For \(h\ge 0\), we denote by \(\mathscr B_h\) the class of all locally uniform limits of normalized functions of the form
\[
x\longmapsto \phi_{h_n}(0,p_n)-\phi_{h_n}(x,p_n),
\qquad
h_n\ge 0,
\quad h_n\to h,
\quad \|p_n\|\to +\infty.
\]
When the points \(p_n\) are taken along a geodesic ray \(\gamma\) at fixed energy, the resulting limit is the normalized Busemann function
\[
\widehat b_\gamma(x)
=
\lim_{t\to+\infty}\bigl(\phi_h(0,\gamma(t))-\phi_h(x,\gamma(t))\bigr).
\]
Thus exact normalized Busemann functions form a distinguished subclass of the horofunction class \(\mathscr B_h\). In this paper, the phrase ``Busemann data'' refers to this boundary-at-infinity structure, while the notation \(\mathscr B_h\) is reserved for the larger horofunction closure class.

Recent progress has clarified several complementary aspects of this noncompact weak KAM picture. Maderna developed weak KAM theory for \(N\)-body problems and proved the existence of global viscosity solutions through the Lax--Oleinik semigroup \cite{maderna_2012}. Maderna and Venturelli later constructed hyperbolic motions with arbitrary initial configuration, prescribed collision-free limit shape, and prescribed positive energy by means of global viscosity solutions of the Hamilton--Jacobi equation \cite{10.4007/annals.2020.192.2.5}. Polimeni and Terracini established, by a renormalized variational principle, the existence of minimal expansive half-entire solutions in the hyperbolic, parabolic, and hyperbolic-parabolic regimes; the corresponding value functions yield viscosity solutions after a linear correction \cite{Polimeni-Terracini-2024}. In the hyperbolic case, Maderna and Venturelli proved the uniqueness of the Busemann function associated with a prescribed collision-free limit shape, localized a cone of differentiability, and showed that every hyperbolic motion is eventually minimizing \cite{Maderna-Venturelli-2026}. From the PDE and geometric-measure side, Berti, Polimeni, and Terracini studied renormalized value functions as viscosity solutions and proved rectifiability results for their singular and conjugate sets \cite{Terracini-2025}.

A classical stability result underlying the hyperbolic theory is Chazy's continuity of the limit-shape map. In the formulation recalled by Maderna--Venturelli \cite[Lemma 4.1]{10.4007/annals.2020.192.2.5}, 
the set of hyperbolic initial data is open and the map sending a hyperbolic initial datum to its asymptotic velocity is continuous; 
the 2026 work \cite{Maderna-Venturelli-2026} of Maderna--Venturelli further proves \(C^1\)-regularity of this map on the hyperbolic region. Our compactness question is different but complementary. 
Chazy's lemma states the persistence and variation of the asymptotic velocity among nearby hyperbolic initial data. The present paper gives 
the persistence of the calibrated/geodesic-ray property itself, the exclusion of collision in ambient limits, and the stability of the associated normalized horofunction data. In particular, our first theorem is not merely a continuity statement for the limit shape; it is a compactness theorem for the solution set selected by the Jacobi--Maupertuis variational principle and by Hamilton--Jacobi boundary data.

The present paper mainly studies two related solution sets selected by these variational and Hamilton--Jacobi structures. The first one is the set of geodesic-ray initial data
\[
GR:=\{(x,v)\in T\Omega:\ \text{the maximal classical solution with initial datum }(x,v)
\text{ is a geodesic ray}\}.
\]

Our first main result states that \(GR\) is stable and closed under ambient convergence in \(TE^N\). 
It states that limits of geodesic-ray data remain collision-free and again generate geodesic rays, and that 
the corresponding horofunction data converge, up to subsequences, to a function that still calibrates the limiting ray.

\begin{thm}[Compactness of geodesic-ray data and normalized horofunctions]\label{thm:compactness_GR_Busemann}
Let \((x_n,v_n)\subset GR\) and assume that
\[
(x_n,v_n)\to (x_0,v_0)\qquad\text{in }TE^N.
\]
For each \(n\), let \(\gamma_n:[0,+\infty)\to\Omega\) be the geodesic ray with
\[
\gamma_n(0)=x_n,
\qquad
\dot\gamma_n(0)=v_n,
\]
and let
\[
h_n=\frac12\|v_n\|^2-U(x_n)
\]
be its total energy. Then the following assertions hold.
\begin{enumerate}
\item \(x_0\in\Omega\), and
\[
h_n\to h:=\frac12\|v_0\|^2-U(x_0)\ge 0.
\]

\item The normalized functions
\[
u_n(x):=\phi_{h_n}(0,p_n)-\phi_{h_n}(x,p_n)
\]
admit a locally uniformly convergent subsequence, still denoted by \(u_n\), such that
\[
u_n\to u\qquad\text{locally uniformly on }E^N,
\]
where \(u\in\mathscr B_h\).
\item If \(\gamma:[0,t_*)\to\Omega\) is the maximal classical solution with initial datum
\[
\gamma(0)=x_0,
\qquad
\dot\gamma(0)=v_0,
\]
then \(t_*=+\infty\), the curve \(\gamma\) \(h\)-calibrates \(u\), and therefore \(\gamma\) is a free-time minimizer of \(A_h\). In particular, \(\gamma\) is a geodesic ray.
\end{enumerate}
\end{thm}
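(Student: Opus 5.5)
The plan is to get part~1 from the energy identity, part~2 from a uniform Hölder-type estimate on \(\phi_h\) combined with Arzelà--Ascoli, and part~3 by passing the calibration identity along \(\gamma_n\) to the limit and then using Marchal's theorem to rule out a finite-time collision. Throughout, \(p_n\) is taken on the ray \(\gamma_n\): I choose \(t_n\to+\infty\) with \(p_n:=\gamma_n(t_n)\) and \(\|p_n\|\ge n\). This is possible because a geodesic ray is unbounded, since \(\phi_{h_n}(x_n,\gamma_n(t))\to+\infty\) along it.

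\textbf{Part 1.} Since \(\gamma_n\) is a geodesic ray, it is a free-time minimizer of \(A_{h_n}\), which forces \(h_n\ge 0\). If \(x_0\in\Delta\), then \(U(x_n)\to+\infty\). Because \((v_n)\) is bounded, this gives
\[
h_n=\tfrac12\|v_n\|^2-U(x_n)\to-\infty,
\]
which contradicts \(h_n\ge 0\). Hence \(x_0\in\Omega\). Continuity of \(U\) on \(\Omega\) then gives \(h_n\to h\), and \(h\ge0\) as a limit of nonnegative numbers.

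\textbf{Part 2.} I will use the standard Maderna-type bound, uniform for \(h\) in a bounded set \([0,h^*]\):
\[
\phi_{h}(x,y)\le C\bigl(\|x-y\|^{1/2}+\|x-y\|\bigr).
\]
By the triangle inequality this makes the functions \(u_n\) equicontinuous on \(E^N\). They also satisfy \(u_n(0)=0\), so they are locally bounded. Arzelà--Ascoli then gives a locally uniformly convergent subsequence. Since \(h_n\to h\) and \(\|p_n\|\to\infty\), the limit \(u\) lies in \(\mathscr B_h\) by the very definition of that class.

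Passing to the limit in \(u_n(x)-u_n(y)\le\phi_{h_n}(y,x)\), using that \(\phi_h\) is continuous in \(h\) (a consequence of the same estimates), shows that \(u\) is \(\phi_h\)-dominated:
\[
u(x)-u(y)\le \phi_h(y,x).
\]

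\textbf{Part 3, calibration.} For \(0\le s<t\le t_n\), the ray property together with the triangle equality along the ray gives
\[
u_n(\gamma_n(t))-u_n(\gamma_n(s))=\phi_{h_n}(\gamma_n(s),\gamma_n(t))=A_{h_n}(\gamma_n|_{[s,t]}).
\]
By continuous dependence on initial data in \(T\Omega\) (available since \(x_0\in\Omega\)), \(\gamma_n\to\gamma\) in \(C^1\) on every compact \([0,T]\subset[0,t_*)\). Passing to the limit gives, for \(0\le s<t<t_*\),
\[
u(\gamma(t))-u(\gamma(s))=A_h(\gamma|_{[s,t]}).
\]
Together with domination, this shows that \(\gamma\) \(h\)-calibrates \(u\) on \([0,t_*)\), and hence that every restriction \(\gamma|_{[s,t]}\) is a free-time minimizer.

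\textbf{Part 3, the main obstacle: showing \(t_*=+\infty\).} Suppose \(t_*<\infty\) and fix \(T>t_*\). I first need uniform control of the \(\gamma_n\) on \([0,T]\). Write \(D_n:=\|\gamma_n(T)-x_n\|\) and \(A_n:=A_{h_n}(\gamma_n|_{[0,T]})\). Then:
\begin{itemize}
\item Cauchy--Schwarz gives \(D_n\le\sqrt{2TA_n}\).
\item Minimality gives \(A_n=\phi_{h_n}(x_n,\gamma_n(T))\le C(D_n^{1/2}+D_n)\).
\end{itemize}
Combining these two bounds makes \((D_n)\) and \((A_n)\) bounded, so the \(\gamma_n\) are uniformly \(\tfrac12\)-Hölder on \([0,T]\). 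A subsequence then converges uniformly to a curve \(\eta\) with \(\eta=\gamma\) on \([0,t_*)\).

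Next I show \(\eta\) is a free-time minimizer on \([0,T]\). Lower semicontinuity of the action and continuity of \(\phi\) in \((x,y,h)\) give
\[
A_h(\eta)\le\liminf_n A_{h_n}(\gamma_n|_{[0,T]})=\liminf_n\phi_{h_n}(x_n,\gamma_n(T))=\phi_h(x_0,\eta(T)),
\]
so \(\eta\) is a free-time minimizer on \([0,T]\).

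By Marchal's theorem, \(\eta\) has no collisions in \((0,T)\), so \(\eta\) is a classical solution there that extends \(\gamma\) past \(t_*\). This contradicts maximality, since by Painlevé's theorem a finite \(t_*\) forces \(U(\gamma(t))\to\infty\) as \(t\to t_*\). Hence \(t_*=+\infty\).

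Finally, calibration on \([0,\infty)\) gives \(A_h(\gamma|_{[0,t]})=\phi_h(x_0,\gamma(t))\) for all \(t\), so \(\gamma\) is a geodesic ray. The delicate points are the uniform a priori bound on \([0,T]\) and the invocation of Marchal's theorem for limits of free-time minimizers with varying energies \(h_n\). If needed, I would first rescale to fixed energy, or apply Marchal to the fixed-endpoint problem with endpoints \(x_0\) and \(\eta(T)\).
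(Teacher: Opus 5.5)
Your proof is correct. Parts 1 and 2, and the calibration on $[0,t_*)$, follow the paper's argument:
\begin{itemize}
\item the energy identity excludes $x_0\in\Delta$;
\item the uniform Maderna modulus together with Ascoli gives $u\in\mathscr B_h$;
\item the identity $u_n(\gamma_n(t))-u_n(x_n)=A_{h_n}(\gamma_n|_{[0,t]})$ passes to the limit under $C^1$ convergence on compact subsets of $[0,t_*)$.
\end{itemize}
The genuine difference is in the proof of $t_*=+\infty$.

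The paper argues through the horofunction $u$. It invokes the existence of forward $h$-calibrating curves from every point for elements of $\mathscr B_h$, proved in the Appendix via Zorn's lemma and von Zeipel's theorem. It glues such a curve starting at $\gamma(t')$ to $\gamma|_{[0,t']}$, uses domination to obtain a free-time minimizer, and then applies Marchal's theorem (no corner at $t'$) and ODE uniqueness to extend $\gamma$ past $t_*$.

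You instead run a direct-method argument on the rays themselves. The Cauchy--Schwarz/Maderna a priori bound on $[0,T]$ with $T>t_*$ gives uniform $\tfrac12$-H\"older compactness. Lower semicontinuity of the action and upper semicontinuity of $(x,y,h)\mapsto\phi_h(x,y)$ then show that the uniform limit $\eta$ is a free-time minimizer at the fixed energy $h$. Marchal applies directly to $\eta$, so your worry about varying energies is moot.

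Your route is more self-contained. It shows that the minimizing property passes to limits independently of $u$: repeating it for every $T$ yields the geodesic-ray conclusion without calibration, so $u$ is needed only for the calibration assertion. The paper's route needs no estimate on $\gamma_n$ beyond $t_*$, but it depends on the heavier Appendix proposition; in exchange it ties the extension to the weak KAM structure of $u$, which fits the paper's Hamilton--Jacobi emphasis.

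Two small points to tidy:
\begin{itemize}
\item Continuity of $h\mapsto\phi_h(x,y)$ is not a consequence of the H\"older estimate. However, the upper semicontinuity you actually use follows at once from $\phi_h=\inf_{t>0}\bigl(\phi(x,y,t)+ht\bigr)$.
\item For $h_n=0$, unboundedness of $\gamma_n$ needs one line. A bounded ray has $U\ge c>0$ along it, so $A_0(\gamma_n|_{[0,t]})\to+\infty$, which contradicts $\phi_0\le\mu$. The paper simply cites Burgos--Maderna here.
\end{itemize}
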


As a first refinement, we proved Proposition \ref{prop:precompact_busemann_data}, which concerns normalized Busemann functions
 associated with the rays \(\gamma_n\).
 They are relatively compact in \(C_{\mathrm{loc}}(E^N)\), and their cluster points belong to the horofunction class \(\mathscr B_h\). 
 Moreover, the associated viscosity solutions of $\gamma_n$ are \(-\widehat b_{\gamma_n}\), if \(\widehat b_{\gamma_n}\to u\) and \(h_n\to h\),
  the limit \(-u\) is a viscosity solution of \(H(x,Du)=h\) on \(\Omega\). This makes the compactness theorem a stability statement not 
  only for calibrated motions but also for the corresponding Hamilton--Jacobi boundary data.
For a fixed collision-free hyperbolic limit shape \(a\), the fixed-shape Busemann uniqueness theorem of Maderna--Venturelli \cite{Maderna-Venturelli-2026} allows 
us to prove closedness of the corresponding slice \(GR(a)\subset T\Omega\). 

As our second main result, the fixed-shape $GR(a)$ has a further geometric-measure formulation
 after removing translations. Define the reduced configuration space
\[
X:=\left\{x=(r_1,\dots,r_N)\in E^N:\ \sum_{i=1}^N m_i r_i=0\right\},
\qquad
\Omega_X:=\Omega\cap X,
\]
and write \(TX\simeq X\times X\). 
For any $a\in\Omega_X$, define the reduced fixed-shape slice

\[
GR_X(a):=\{(x,v)\in T\Omega_X:\ \text{the hyperbolic solution starting from }(x,v)
\text{ with limit shape }a\}.
\]
 Thus \(GR_X(a)\) consists of initial data whose corresponding hyperbolic geodesic rays have limit shape \(a\). The theorem of the cone and the regularity of the limit-shape map imply that, on a suitable cutted cone, \(GR_X(a)\) is the graph of a \(C^1\) velocity field. Globalizing this graph structure along the Hamiltonian flow gives the following geometric-measure description.

\begin{thm}[Geometric measure structure of the fixed-shape slice]\label{thm:GRa_geom_measure}
Let \(a\in\Omega_X\). Then \(GR_X(a)\) is countably \(d(N-1)\)-rectifiable in \(TX\). Moreover, the Hausdorff measure
\[
\dim_H GR_X(a)= d(N-1).
\]
\end{thm}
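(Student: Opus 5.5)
The plan is to realise \(GR_X(a)\) as a countable union of Lipschitz (indeed \(C^1\)) images of open subsets of \(\mathbb R^{d(N-1)}\); this gives the rectifiability. For the dimension, we find one piece that is an embedded \(C^1\) graph of dimension exactly \(d(N-1)=\dim X\), which gives the lower bound.

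\textbf{Step 1 (local graph on the cone).} Following the discussion preceding the statement, we use the Maderna--Venturelli cone theorem \cite{Maderna-Venturelli-2026}. There is a truncated cone
\[
C=\{x\in\Omega_X:\ \|x-\lambda a\|<\varepsilon\lambda,\ \lambda>\lambda_0\}
\]
on which the unique Busemann function \(b_a\) with limit shape \(a\) is \(C^1\). For \(x\in C\), the hyperbolic ray from \(x\) with limit shape \(a\) is unique and has initial velocity \(V_a(x)=\nabla b_a(x)\). Uniqueness comes from the fixed-shape Busemann uniqueness: every such ray calibrates \(b_a\), so its velocity is a reachable gradient, and differentiability forces it to be unique. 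The \(C^1\) regularity of the limit-shape map \(\Lambda\) on the hyperbolic region \(\mathcal H\) gives that \(\{(x,v)\in C\times X:\Lambda(x,v)=a\}\) is the graph of \(V_a\). An alternative route is to obtain this from the implicit function theorem, using that \(\partial_v\Lambda\) is invertible near the cone. Hence \(G_C:=GR_X(a)\cap (C\times X)=\Graph(V_a|_C)\) is a \(C^1\) embedded submanifold of \(TX\) of dimension \(d(N-1)\).

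\textbf{Step 2 (globalisation by the flow).} Let \(\Phi^t\) be the Hamiltonian flow on \(T\Omega_X\), restricted to the centre-of-mass-zero, zero-momentum sector, which is invariant. Since \(GR_X(a)\) is defined by the asymptotic limit shape, it is flow-invariant in forward time. Every hyperbolic motion \(x(t)\) with limit shape \(a\) satisfies \(x(t)=ta+o(t)\), so \(x(t)\in C\) for \(t\) large. Therefore
\[
GR_X(a)\subset\bigcup_{k\in\mathbb N}\Phi^{-k}(G_C)\cap T\Omega_X .
\]
Each \(\Phi^{-k}\) is a \(C^1\) diffeomorphism between open subsets of \(T\Omega_X\), defined wherever the backward solution exists up to time \(k\). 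Hence \(\Phi^{-k}(G_C\cap \mathrm{dom})\) is a \(C^1\) submanifold of dimension \(d(N-1)\).

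Writing each such submanifold as a countable union of compact pieces on which \(\Phi^{-k}\circ(\mathrm{id},V_a)\) is Lipschitz, we obtain a countable union of Lipschitz images of subsets of \(\mathbb R^{d(N-1)}\). This is countable \(d(N-1)\)-rectifiability. The upper bound \(\dim_H\le d(N-1)\) follows immediately.

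\textbf{Step 3 (lower bound).} \(G_C\) is a nonempty \(C^1\) graph over the open set \(C\subset X\). The projection \((x,v)\mapsto x\) is \(1\)-Lipschitz and maps \(G_C\) onto \(C\), which has positive \(\mathcal H^{d(N-1)}\) measure. Hence \(\mathcal H^{d(N-1)}(G_C)>0\) and \(\dim_H GR_X(a)\ge d(N-1)\).

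\textbf{Main obstacle.} The delicate point is Step 1: showing that on the cone the slice is \emph{exactly} a graph, with no second hyperbolic ray of shape \(a\) from the same point. The covering in Step 2 needs the whole fibre over \(C\) to be captured by \(V_a\). I would try first to get this directly from the fixed-shape Busemann uniqueness together with differentiability of \(b_a\) on the cone, since calibrating curves emanate from points of differentiability only in the gradient direction. If that is insufficient, I would fall back on the \(C^1\) submersion property of \(\Lambda\) restricted to velocities. A secondary check is that the forward-entry time into \(C\) may be unbounded; this is harmless because we use a countable union over \(k\).
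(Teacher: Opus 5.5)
Your proposal is correct and is essentially the paper's proof. It takes the $C^1$ graph $\Graph(V_a)$ over a truncated cone from Maderna--Venturelli, covers the slice by $GR_X(a)\subset\bigcup_{n}\Phi_{-n}(\Graph V_a)$ via the flow, makes the pieces Lipschitz by compact exhaustion, and gets the lower bound from a graph patch; your $1$-Lipschitz projection replaces the paper's area formula with $J\Psi\ge 1$, which is an equally valid variant.

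One side claim should be dropped: the full level set $\{\Lambda=a\}\cap(C\times X)$ is not a graph. Already for $N=2$, nearly back-scattered Kepler orbits start in the cone moving inward and still have limit shape $a$; these are not rays, so $G_C$ itself is still a graph. You should therefore use invertibility of $\partial_v\Lambda$ only locally, to upgrade the continuous $V_a$ to $C^1$, and rest the set equality for ray data on your Busemann--differentiability argument, which is the role the theorem of the cone plays in the paper.
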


This theorem describes the regular phase-space side of the fixed-shape Hamilton--Jacobi problem. It is complementary to the singular-set theory in \cite{Terracini-2025}: Berti--Polimeni--Terracini study the codimension-one irregular set of renormalized Hamilton--Jacobi value functions in configuration space, whereas \Cref{thm:GRa_geom_measure} describes the rectifiable slice of initial data selected by a fixed-shape Busemann solution in phase space.

The paper is organized as follows. \Cref{Variational background} collects the variational background: action potentials, free-time minimizers, geodesic rays, calibrating curves, and horofunctions. \Cref{sec:compactness} proves \Cref{thm:compactness_GR_Busemann}, the relative compactness of exact normalized Busemann functions, the Hamilton--Jacobi interpretation of their cluster points, and the closedness of fixed-shape hyperbolic slices. \Cref{sec:geometric} proves \Cref{thm:GRa_geom_measure}, the geometric-measure structure of \(GR_X(a)\).

\section{Variational background}\label{Variational background}

\subsection{Action potentials, free-time minimizers and geodesic rays}

The dynamics of the $N$ bodies are ruled by the Newton's universal gravitation law, that is
\begin{equation}\label{Newton motion equation}
    \ddot{x}=\nabla U(x),\text{ for } 1\leq i\leq N,
  \end{equation}
 the gradient is taken w.r.t the mass inner product.

The Lagrangian $L$ on $E^N \times E^N$ is defined as
  \begin{equation}
    L(x,v)=\frac{1}{2}\|v\|^{2}+U(x).
  \end{equation}
  According to  the famous Hamilton's principle, a solution of the Newton's equation joining two arbitrary given $x,y\in E^N$ must be a critical point of the Lagrangian action, and more precisely
  a minimizer of Lagrangian action among space of absolutely continuous curves joining $x$ and $y$ with  fixed time interval. 
  The existence of such minimizers  was already confirmed by Tonelli's theorem, see F. Clarke  \cite[p321, Theorem 16.2]{clarke2013functional}.
  We next give this approach explicitly.
  
  Define the set of absolutely curves
  
  \begin{equation}
     AC  (x,y,T):=\left\{\gamma:[0,T]\rightarrow E^N \text{ is absolutely continuous, } \gamma(0)=x,\gamma(T)=y\right\}
   \end{equation}
   and
   \begin{equation}
     AC  (x,y)=\cup_{T>0}   AC  (x,y,T),
   \end{equation}
   since without loss of generality, we can always assume each curve starting at $0$ ending at some $T>0$ for $T$.

The Hamiltonian $H$ of the Lagrangian is the first integral of the solutions to the Newton's equation, and 
$h$ is the total energy constant.
The Hamilton-Jacobi equation is 
\begin{equation}\label{HJ equation}
    H(x,d_x u)= \frac{1}{2}\|d_x u\|_*^2 -U(x) =h,
\end{equation}
where the norm $\|\cdot\|_*$ is the norm of $(E^*)^N$, for $p\in (E^*)^N$,
\begin{equation}
\|p\|_*^2 = \sum_{i=1}^N m_i^{-1}|p_i|^2.
\end{equation}
The Lagrangian action of $\gamma\in  AC  (x,y)$ with energy constant $h\geq 0$ is

\begin{equation}
    A_h (\gamma)=\int_0^TL(\gamma,\dot{\gamma})+hdt= \int_0^T \frac{1}{2}\|\dot{\gamma}\|^{2}+U(\gamma)+hdt
  \end{equation}
when $h=0$, we simply write $A$. It is well defined since $\Omega$ is connected, and for all $x,y\in\Omega$ there is a smooth $\gamma\in  AC  (x,y,T)$ and $U$ is bounded 
in the compact set $\gamma\vert_{[0,T]}$ hence $A_h (\gamma)<\infty$ and $0\leq A_h\not\equiv+\infty$.
  
  However, once there was a big obstacle for the calculus of variations that under Newton potential there will be trajectory with isolated collisions and its Lagrangian action is finite, 
  this discovery is due to Poincar{\'e} \cite{zbMATH02677123}, leading to a question that a minimizer may not be a true motion.
  But mathematicians did not worry about it long since Marchal   \cite{Marchal2002} have made a great breakthrough,  making sure 
  in case of more than 2 dimensional, the minimization process 
  confirms the absence of collision in the classical Newtonian $N$-body problem, see more in Chenciner \cite{zbMATH01789893}.
  In 1-D case, Yu and Zhang \cite{Yu2018} has obtained important results under some conditions.
We shall use the  classical result of Marchal: if $ \gamma$ minimizes $A$ over $AC(x,y,T)$,
then $\gamma$ has no interior collision; see \cite{Marchal2002,zbMATH01789893}.
  Therefore a minimizer of the action is a solution of the Newtonian equation \ref{Newton motion equation} in $(0,T)$ and a true motion of Newtonian systems.

Let $x,y\in E^N$ be arbitrary, we define the fixed time action potential for time $T$:
\begin{equation*}
    \phi(x,y,T)=\inf \left\{ A(\gamma)\big\vert\gamma\in AC (x,y,T) \right\},
\end{equation*}
and the free-time action potential $ \phi_h (\cdot,\cdot)$:
\begin{equation*}
    \phi_h (x,y)=\inf\{A_h (\gamma)\mid\gamma\in  AC  (x,y)\}.
\end{equation*}
A curve $\gamma_0\in  AC  (x,y,T)$ is called a fixed-time minimizer of $A$ if it minimizes $A$ in $ AC  (x,y,T)$.
Similarly, a curve $\gamma_0\in  AC  (x,y)$ is called a free-time minimizer of $A_h$ if it minimizes $A_h$ in $ AC  (x,y)$, 
the existence of free-time minimizers is proved, see  \cite[Theorem 3.1]{daluz-maderna-2014} and  \cite[Lemma 4.2]{10.4007/annals.2020.192.2.5}.
It is obvious that a free-time minimizer is a fixed time minimizer hence experiences no collision in the middle of the interval.

\begin{prop}\label{free-time minimizer of phi_h exist}
For $h>0$ and $x\neq y$, the infimum in the definition of $\phi_h(x,y$) is attained by some
curve $\gamma\in AC(x,y)$; see \cite[Lemma 4.2]{10.4007/annals.2020.192.2.5}.
For $h=0$ and $x\neq y$, the analogous statement for the critical action potential $\phi(x,y$)
is classical; see da Luz--Maderna \cite[Theorem 3.1]{daluz-maderna-2014}.
\end{prop}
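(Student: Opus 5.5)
The statement is essentially a citation: existence of free-time minimizers for $\phi_h(x,y)$. I would prove it by the direct method in the calculus of variations, splitting into the cases $h>0$ and $h=0$.

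\textbf{Case $h>0$.} First I would fix $x\neq y$ and take a minimizing sequence $\gamma_n\in AC(x,y,T_n)$ with $A_h(\gamma_n)\to\phi_h(x,y)<\infty$. Finiteness comes from joining $x$ to $y$ by a curve whose interior avoids $\Delta$. The key step is to bound $T_n$ from both sides.
\begin{itemize}
\item From above, $A_h(\gamma_n)\ge hT_n$ gives $T_n\le C/h$.
\item From below, Cauchy--Schwarz gives $\int_0^{T_n}\tfrac12\|\dot\gamma_n\|^2\ge \|x-y\|^2/(2T_n)$, which blows up as $T_n\to0$ since $x\neq y$.
\end{itemize}
Hence, along a subsequence, $T_n\to T\in(0,\infty)$. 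I would then rescale the curves to $[0,T]$, which changes the action only by factors tending to $1$.

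The kinetic bound gives equicontinuity (Hölder-$1/2$) and weak $H^1$ compactness. Weak lower semicontinuity of the kinetic term, together with Fatou's lemma for $U\ge0$ (lower semicontinuous with values in $[0,+\infty]$), then yields a limit $\gamma\in AC(x,y,T)$ with $A_h(\gamma)\le\liminf A_h(\gamma_n)$. So $\gamma$ attains the infimum.

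\textbf{Case $h=0$.} Here the upper bound on $T_n$ fails, since time can escape to infinity. This is the main obstacle, and I would not reprove it. The classical fact from da Luz--Maderna \cite[Theorem 3.1]{daluz-maderna-2014} is that the critical potential $\phi(x,y)=\inf_T\phi(x,y,T)$ is attained. Their argument uses the homogeneity of $U$ (degree $-1$) and the scaling estimate $\phi(x,y,T)\ge$ (something increasing for large $T$) to show the infimum over $T$ is achieved at a finite time. Fixed-time minimizers exist by Tonelli.

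\textbf{Regularity of the minimizer.} Finally, Marchal's theorem shows that the minimizer has no interior collisions, so it is a genuine motion on $(0,T)$. I expect the $h=0$ time-compactness to be the only nontrivial point. For it I would invoke the cited results, as the statement itself does.
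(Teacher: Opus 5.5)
Your proposal is correct. It differs from the paper only in how much is actually proved. The paper gives no argument for this proposition and delegates both cases to the literature (Maderna--Venturelli for $h>0$, da Luz--Maderna for $h=0$). You instead give a self-contained direct-method proof for $h>0$ and cite only for $h=0$. Your $h>0$ argument works as written:
\begin{itemize}
\item $hT_n\le A_h(\gamma_n)$ and $\|x-y\|^2\le 2T_n\int_0^{T_n}\tfrac12\|\dot\gamma_n\|^2\,dt$ confine $T_n$ to a compact subset of $(0,\infty)$;
\item rescaling to $[0,T]$ multiplies the kinetic and potential parts by $T_n/T$ and $T/T_n$ respectively;
\item weak $H^1$ lower semicontinuity of the kinetic term, together with Fatou's lemma for $U:E^N\to(0,+\infty]$, yields a minimizer in $AC(x,y,T)$.
\end{itemize}
The citation buys the paper brevity. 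Your version shows that only lower semicontinuity of the action and a two-sided time bound are needed, and exactly where $h>0$ and $x\neq y$ enter.

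Two small remarks. First, your justification of $\phi_h(x,y)<\infty$ is valid only for $x,y\in\Omega$. If $x\in\Delta$, every Lipschitz curve starting at $x$ satisfies $U(\gamma(t))\ge c/t$ near $t=0$ and so has infinite action. Avoiding $\Delta$ in the interior is therefore not enough by itself. Use instead $\phi_h(x,y)\le\mu(\|x-y\|)$ from Proposition~\ref{uniform bound for phi_hn}, or observe that attainment is vacuous if $\phi_h(x,y)=+\infty$.

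Second, the $h=0$ case is less of an obstacle than you suggest. Your direct method covers it once $T_n$ is bounded above, and that bound follows from $A(\gamma_n)\le C$ alone. Write $z_{\mathrm{rel}}$ for the mass-orthogonal projection of $z$ onto the center-of-mass frame $X$. Then $U(z)\ge c_0/\|z_{\mathrm{rel}}\|$ for some $c_0>0$ depending only on the masses, since every mutual distance is bounded by a multiple of $\|z_{\mathrm{rel}}\|$. By Cauchy--Schwarz, $\|(\gamma_n(t))_{\mathrm{rel}}\|\le\|x_{\mathrm{rel}}\|+\sqrt{2Ct}$. Hence
\[
C\;\ge\;\int_0^{T_n}U(\gamma_n(t))\,dt\;\ge\;\int_0^{T_n}\frac{c_0\,dt}{\|x_{\mathrm{rel}}\|+\sqrt{2Ct}},
\]
and the right-hand side diverges as $T_n\to\infty$. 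Combine this with your Cauchy--Schwarz lower bound $T_n\ge\|x-y\|^2/(2C)$. The same rescaling and lower-semicontinuity argument then proves both cases without appeal to the cited results.
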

A curve $\gamma$ defined on $[0,+\infty)$ is called a free-time minimizer if it is a free-time minimizer 
of the Lagrangian action when restricted in any compact subinterval $[a,b]\subset [0,+\infty)$,
that is, $\gamma_{[a,b]}$ minimizes Lagrangian over $ AC (\gamma(a),\gamma(b))$.
Following the idea of Marchal, if $\gamma\in AC (x,y,T)$ minimizes $A_h$ over $ AC (x,y)$, then it must minimize $A_h$
over $ AC (x,y,T)$, hence minimizes $A$ over $ AC (x,y,T)$ since $A_h=A+hT$.
Therefore, if $\gamma\in AC (x,y)$ minimizes $A_h$, then $\gamma\vert_{(0,T)}\subset\Omega$.

The Jacobi-Maupertuis metric is defined as $j_h = 2(h+U)g_m$, $g_m$ the usual mass inner product.
A geodesic ray of the $N$-body system  is an arclength parameterized geodesic $\gamma:[0,+\infty) \rightarrow E^N$ such that all of its restrictions to compact subintervals are minimizing geodesics.

\begin{prop}\label{free-time minimizer equivalent to geodesic ray}
    After a suitable reparameterization, a geodesic ray of the metric $j_h$ is a free-time minimizer of $A_h$,
    and a free-time minimizer of $A_h$ is a geodesic ray of the metric.
\end{prop}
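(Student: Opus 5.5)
The plan is to prove both directions through the classical Maupertuis principle, i.e. the comparison between the free-time action $A_h$ and the Jacobi--Maupertuis length
\[
\ell_h(\gamma)=\int_0^T \sqrt{2(h+U(\gamma))}\,\|\dot\gamma\|\,dt ,
\]
which is the length of $\gamma$ in the metric $j_h$. The key pointwise tool is the elementary inequality
\[
\tfrac12\|\dot\gamma\|^2+U(\gamma)+h \;\ge\; \sqrt{2(h+U(\gamma))}\,\|\dot\gamma\| .
\]
Integrating it gives $A_h(\gamma)\ge \ell_h(\gamma)$ for every $\gamma\in AC(x,y)$. Equality holds exactly when $\tfrac12\|\dot\gamma\|^2=U(\gamma)+h$ almost everywhere, that is, when $\gamma$ is parametrized at energy $h$.

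The first step is to show that $\phi_h(x,y)=d_h(x,y)$, where $d_h$ is the $j_h$-distance (the infimum of $\ell_h$ over $AC(x,y)$). The inequality already gives $\phi_h\ge d_h$. For the reverse inequality, I would take any curve with finite length and reparametrize it so that $\tfrac12\|\dot\gamma\|^2=U(\gamma)+h$. Since $\ell_h$ is invariant under reparametrization, this produces a curve with $A_h=\ell_h$.

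This reparametrization is the step I expect to be the main obstacle. One must solve $ds/dt=\sqrt{2(h+U)}$ along the arclength parameter of $\gamma$, and this degenerates in two places:
\begin{itemize}
\item at collision points, where $U=+\infty$ and the new parametrization spends zero time;
\item when $h=0$ and the curve escapes to infinity, where $U\to 0$.
\end{itemize}
On compact curves joining two points, $U\ge c>0$, so the time change $t(s)=\int ds/\sqrt{2(h+U)}$ is finite, absolutely continuous and strictly increasing. Collisions form a closed set of parameters that can be handled by approximation. To sidestep the issue entirely, I would first use Marchal's theorem, as recalled above: minimizers in either sense are collision-free in the interior. It then suffices to compare the two functionals on curves in $\Omega$, where everything is smooth.

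Given $\phi_h=d_h$, both directions follow.
\begin{itemize}
\item \emph{Free-time minimizer $\Rightarrow$ geodesic ray.} Let $\gamma$ be a free-time minimizer of $A_h$ on $[0,+\infty)$. Then for every $[a,b]$ we have $A_h(\gamma|_{[a,b]})=\phi_h(\gamma(a),\gamma(b))=d_h(\gamma(a),\gamma(b))\le \ell_h(\gamma|_{[a,b]})\le A_h(\gamma|_{[a,b]})$. Hence equality holds throughout, so each restriction is a $j_h$-minimizing curve. Reparametrizing by $j_h$-arclength yields a geodesic ray. The length is infinite because $j_h\ge 2h\,g_m$ when $h>0$; when $h=0$ one uses that the curve is unbounded. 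The energy equality also shows that $\gamma$ is a motion of energy $h$.
\item \emph{Geodesic ray $\Rightarrow$ free-time minimizer.} Let $\gamma$ be a geodesic ray. Its interior is collision-free, because a $j_h$-minimizer reparametrized at energy $h$ becomes an $A_h$-minimizer, to which Marchal's theorem applies. Reparametrizing at energy $h$ gives $A_h=\ell_h=d_h=\phi_h$ on every compact subinterval, so $\gamma$ is a free-time minimizer.
\end{itemize}

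It remains to check that the reparametrized curve is defined on all of $[0,+\infty)$. The total time equals $\int ds/\sqrt{2(h+U)}$. This integral diverges when $h>0$. When $h=0$ it also diverges, since $U\lesssim 1/\|x\|$ along an unbounded ray, which gives $dt\gtrsim \sqrt{s}\,ds$.
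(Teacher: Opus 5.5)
The paper gives no proof of this proposition. It quotes it as the classical Maupertuis principle and relies on the Maderna--Venturelli fact, stated right after it, that $(E^N,\phi_h)$ is the completion of $(\Omega,d_h)$. Your comparison $A_h\ge\ell_h$, with equality exactly at energy $h$, is the standard argument behind that fact, and on compact pieces lying in $\Omega$ both directions go through.

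The genuine gap is your final step. You need it because a free-time minimizer in the paper's sense lives on all of $[0,+\infty)$, and neither of your justifications for the divergence of the total time is valid.
\begin{itemize}
\item For $h>0$, positivity of $U$ only gives $1/\sqrt{2(h+U)}\le 1/\sqrt{2h}$. That is an \emph{upper} bound on the integrand. Divergence of $\int ds/\sqrt{2(h+U)}$ needs an upper bound on $U$ along the ray, which you do not have a priori.
\item For $h=0$, the estimate $U\lesssim 1/\|x\|$ is false for general configurations; in the center-of-mass frame only $U\ge c/\|x\|$ holds (a tight binary far out violates the upper bound). Along a ray, $U\lesssim 1/\|x\|$ amounts to expansivity, i.e.\ essentially Theorem~\ref{thm:BM_asymptotics}. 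That result is deep and is phrased in terms of the very objects you are setting up.
\end{itemize}
This is not a mere technicality. General motions can go to infinity in finite time with $U$ unbounded (noncollision singularities of Xia type). So something specific to minimality must be used, and your argument uses only that the curve is unbounded and parametrized at energy $h$.

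The fix uses minimality instead of any control of $U$, exactly as in the pseudocollision step of the Appendix. Let $\tilde\gamma:[0,t^*)\to E^N$ be the energy-$h$ reparametrization of the ray, $x=\tilde\gamma(0)$, $\sigma(t)$ the $j_h$-arclength, and $r(t)=\|\tilde\gamma(t)-x\|$. By Proposition~\ref{uniform bound for phi_hn} and Cauchy--Schwarz (using $A_h\ge\frac12\int\|\dot{\tilde\gamma}\|^2$),
\[
\sigma(t)=A_h\bigl(\tilde\gamma|_{[0,t]}\bigr)=\phi_h(x,\tilde\gamma(t))\le\mu(r(t)),
\qquad
r(t)^2\le 2t\,A_h\bigl(\tilde\gamma|_{[0,t]}\bigr)\le 2t\,\mu(r(t)).
\]
As $t\to t^*$ we have $\sigma(t)\to\infty$, so $r(t)\to\infty$ by the first inequality. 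Since $\mu(r)=O(r)$, the second inequality forces $t\to\infty$, hence $t^*=+\infty$.

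A smaller point concerns your sentence ``its interior is collision-free, because a $j_h$-minimizer reparametrized at energy $h$ becomes an $A_h$-minimizer, to which Marchal's theorem applies.'' This is circular if rays may meet $\Delta$, as the paper's definition with values in $E^N$ allows. Reparametrizing across collision times is precisely the obstacle you deferred. You can resolve this in one of two ways:
\begin{itemize}
\item Take geodesic rays to be $j_h$-geodesics in $\Omega$ for $\sigma>0$. Then Marchal is not needed here, and a collision at $\sigma=0$ is harmless because $\int_0 d\sigma/(2(h+U))$ converges.
\item Invoke the completion result the paper cites.
\end{itemize}
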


Further more, we denote by $d_h$ the Riemannian distance induced by the metric $j_h$,
 Maderna and Venturelli \cite{10.4007/annals.2020.192.2.5} have pointed out that the completion of $(\Omega, d_h)$ is exactly $(E^N,\phi_h)$.
 $\phi_h(h\geq0)$ is a distance,
 it is verified by Maderna \cite{maderna_2012,10.4007/annals.2020.192.2.5}.

Here we mention an important estimate also given by
Maderna \cite{maderna_2012} for $\phi_h(\cdot,\cdot)$ that 
    for a Newtonian $N$-body system in a configuration space $E^{N}$, 
    the masses of the particles and the number $N$ determine two positive constants
    $C_1,C_2$ such that 
    \begin{equation}\label{bound for phi(x,y,t)}
        \phi(x,y,t)\leq C_1\frac{l^2}{t}+C_2\frac{t}{l}
    \end{equation}
    for any $x,y\in E^{N}$, any $l>\|x-y\|$ and $t>0$.
This is needed before applying Ascoli's theorem.

The following result is about a uniform bound for a sequence of action potential, 
we owe it to Maderna and Venturelli \cite[Theorem 2.11]{10.4007/annals.2020.192.2.5} .
\begin{prop}\label{uniform bound for phi_hn}
    If  $\{h_{n}\}$  is  a sequence of bounded non-negative energy constants. There exist constants $\alpha>0$ and $\beta> 0$ such that,  
    \begin{equation}
        \phi_{h_{n}}(x,y)\leq \mu(\|x-y\|),
    \end{equation}
    where $ \mu(\|x-y\|)  = \left(\alpha\|x-y\|+\beta\|x-y\|^2\right)^{1/2}$ and
 $\alpha,\beta$ only depend on the bound of $h_{n}$, the number of bodies $N$ and their masses.
\end{prop}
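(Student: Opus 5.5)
We need to prove Proposition \ref{uniform bound for phi_hn}: a uniform bound $\phi_{h_n}(x,y)\le \mu(\|x-y\|)$ with $\mu(r)=(\alpha r+\beta r^2)^{1/2}$.

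The plan is to insert the fixed-time estimate \eqref{bound for phi(x,y,t)} into the definition of the free-time potential and then optimize over the time. Let $H:=\sup_n h_n<\infty$ and $r:=\|x-y\|$. If $r=0$, then $\phi_{h_n}(x,x)=0$ because $\phi_h$ is a distance, so assume $r>0$.

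\textbf{Step 1: reduce to a fixed-time bound.} Since $AC(x,y,t)\subset AC(x,y)$ and $A_{h}=A+ht$ on $AC(x,y,t)$, taking infima gives, for every $t>0$,
\[
\phi_{h_n}(x,y)\le \phi(x,y,t)+h_n t\le \phi(x,y,t)+Ht .
\]

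\textbf{Step 2: apply Maderna's estimate.} For any $l>r$, \eqref{bound for phi(x,y,t)} gives
\[
\phi_{h_n}(x,y)\le C_1\frac{l^2}{t}+C_2\frac{t}{l}+Ht .
\]
We let $l\downarrow r$. Since the right side is continuous in $l$, this yields
\[
\phi_{h_n}(x,y)\le C_1\frac{r^2}{t}+\Bigl(\frac{C_2}{r}+H\Bigr)t .
\]

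\textbf{Step 3: optimize in $t$.} By AM--GM, the minimum over $t>0$ of $a/t+bt$ is $2\sqrt{ab}$. Here $a=C_1r^2$ and $b=C_2/r+H$, so
\[
\phi_{h_n}(x,y)\le 2\sqrt{C_1r^2\Bigl(\frac{C_2}{r}+H\Bigr)}=\bigl(4C_1C_2\,r+4C_1H\,r^2\bigr)^{1/2}.
\]
Setting $\alpha=4C_1C_2$ and $\beta=4C_1H$ gives the claim.

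\textbf{Dependence of the constants.} $C_1$ and $C_2$ depend only on $N$ and the masses, and $H$ is the bound on the energies, so $\alpha$ and $\beta$ depend only on these data, as stated. If $H=0$, we replace it by $H=1$ (any $H'\ge H$ works) so that $\beta>0$.

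There is no real obstacle: the main point is the choice $t\sim r^{3/2}$ or $t\sim r$ implicit in the optimization, which the AM--GM step handles directly. The only care needed is the strict inequality $l>\|x-y\|$ in \eqref{bound for phi(x,y,t)}, which is dealt with by the limit $l\downarrow r$.
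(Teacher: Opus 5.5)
Your proposal is correct and matches the paper's proof: you bound $\phi_{h_n}(x,y)$ by $\phi(x,y,t)+h_n t$, insert Maderna's estimate, optimize in $t$ to get $2\sqrt{C_1 l^2(C_2/l+h_n)}$, and take $\alpha=4C_1C_2$, $\beta=4C_1 M$. The only difference is cosmetic. You let $l\downarrow r$ before optimizing, which is why you need a separate $r=0$ case. The paper optimizes first and then lets $l\to\|x-y\|$, which covers $x=y$ automatically.
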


\begin{proof}
    Suppose $h_n$ are bounded by a positive number $M$, since $\phi_{h_{n}}(x,y)=\inf_{t>0}\left\{\phi(x,y,t)+h_n t\right\}$, 
    and follows from the estimate \ref{bound for phi(x,y,t)} we first estimate the upper bound for $\phi+h_n t$:
    $$
    \phi(x,y,t)+h_n t\leq \frac{C_1l^2}{t}+(\frac{C_2}{l}+h_n)t,
    $$for all $t>0,l>\|x-y\|$,
    now consider the right handside of the above inequality as a function of $t$, we calculate its minimal value to be 
    $2\sqrt{C_1 l^2\left(\frac{C_2}{l}+h_n\right)}=\left( 4C_1 C_2 l +4C_1 h_n l^2 \right)^{1/2}$, thus if we
    let $\alpha=4C_1C_2, \beta = 4C_1 M $, then by $h_n\leq M$ we have 
    $$
    \phi_{h_n}(x,y)\leq \phi(x,y,t)+h_n t \leq \left(\alpha l + \beta l^2\right)^{1/2}
    $$ holds for all $l>\|x-y\|$, 
    then let $l\rightarrow \|x-y\|$ to justify our claim.
\end{proof}

\subsection{Calibrating curves and Busemann functions}

\begin{defi}[Calibrating curves]
    An absolutely continuous curve $\gamma:I \rightarrow E^N$ in some interval $I$ is called a calibrating curve of a function $u\in C(E^N)$ if 
    \begin{equation}
        u\left(\gamma(t_2)\right) - u\left(\gamma(t_1)\right) = A_h\left(\gamma\big\vert_{[t_1,t_2]}\right)
    \end{equation}
    for any closed compact subinterval $[t_1,t_2]\subset I$. We also call $\gamma$ $h$-calibrates $u$ in $I$.
\end{defi}

\begin{defi}[Busemann functions]\label{Def:Busemann functions}
    Let $\gamma:[0,+\infty)\rightarrow\Omega$ be a arbitrary geodesic ray with energy $h\geq0$, the usual Busemann function of \(\gamma\) in the
     length space \((E^N,\phi_h)\) is well-defined as the limit
    \begin{equation}
        b_{\gamma}(x) = \lim_{t\rightarrow+\infty} \left[\phi_h(x,\gamma(t)) - \phi_h(\gamma(0),\gamma(t))\right]
    \end{equation} 
    The normalized Busemann functions are defined by
        \[
\widehat b_\gamma(x)=b_\gamma(0)-b_\gamma(x) = \lim_{t\to+\infty}\left[\phi_h(0,\gamma(t)) - \phi_h(x,\gamma(t))\right],
\]
and are denoted by \(\widehat{\mathcal B}_h\).
\end{defi}

This is the standard Busemann construction for geodesic rays; see, for instance, \cite{Maderna-Venturelli-2026}.

The following proposition is a slight extension of \cite[Theorem 3.2]{10.4007/annals.2020.192.2.5},
where the energy is fixed and positive. In our setting, a class of functions analogous to Busemann functions is defined
through sequences \(h_n\to h\ge 0\), so we record the corresponding statement explicitly.
Since the argument is essentially the same as in \cite{10.4007/annals.2020.192.2.5}, we
defer the proof to Appendix \ref{appendix}.

\begin{prop}\label{Maderna: exist calibrating curve on [0,+infty)}
    For $h_n\geq 0$ and $h_n\rightarrow h\geq 0$, if $u\in C^0(E^N)$ and $u(x)=\lim_{n\rightarrow\infty}\left(\phi_{h_n}(0,p_n) - \phi_{h_n}(x,p_n)\right)$ 
    locally uniformly for some sequence $p_n\in E^{N}$ 
    with $\|p_n\|\rightarrow+\infty$. 
    Then for any initial configuration $x\in E^N$, there exists a calibrating curve $\gamma$  defined over $[0,+\infty)$ with $\gamma(0)=x$.
    i.e.,
    \begin{equation*}
        u(\gamma(t)) - u(x)= A_h \left(\gamma\big\vert_{[0,t]}\right)
    \end{equation*}
    for all $t>0$.
\end{prop}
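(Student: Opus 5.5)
We follow the weak KAM argument of Maderna--Venturelli \cite[Theorem 3.2]{10.4007/annals.2020.192.2.5}, adapted to the varying energies $h_n\to h$. Write $u_n(y):=\phi_{h_n}(0,p_n)-\phi_{h_n}(y,p_n)$, so that $u_n\to u$ locally uniformly.

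\textbf{Step 1 (domination).} The triangle inequality for $\phi_{h_n}$ gives $u_n(z)-u_n(y)\le\phi_{h_n}(y,z)\le A_{h_n}(\sigma)$ for every $\sigma\in AC(y,z)$. Since $A_{h_n}(\sigma)=A(\sigma)+h_nT\to A_h(\sigma)$ for a fixed curve on $[0,T]$, passing to the limit yields $u(z)-u(y)\le A_h(\sigma)$. Hence $u$ is $h$-dominated, and by Proposition \ref{uniform bound for phi_hn}, $|u(z)-u(y)|\le\mu(\|z-y\|)$.

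\textbf{Step 2 (approximate calibrating curves).} Fix $x$. For $n$ large we have $p_n\neq x$, so Proposition \ref{free-time minimizer of phi_h exist} gives a free-time minimizer $\gamma_n:[0,T_n]\to E^N$ of $A_{h_n}$ from $x$ to $p_n$; for $h_n=0$ we use the critical potential. Every subarc is then minimizing, so $\phi_{h_n}(\gamma_n(s),p_n)=A_{h_n}(\gamma_n|_{[s,T_n]})$. Consequently
\[
u_n(\gamma_n(t))-u_n(x)=\phi_{h_n}(x,p_n)-\phi_{h_n}(\gamma_n(t),p_n)=A_{h_n}\bigl(\gamma_n|_{[0,t]}\bigr)
\]
for $t\le T_n$, so $\gamma_n$ $h_n$-calibrates $u_n$ on $[0,T_n]$.

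We also need $T_n\to+\infty$. One route is the lower bound $\phi(x,p,T)\ge\|x-p\|^2/(2T)$ from kinetic energy and Cauchy--Schwarz. When $h>0$ this gives $T_n\gtrsim\|p_n-x\|/\sqrt{h_n}$ or better. In general we argue as follows: if $T_n$ stayed bounded, the action would be at least $\|p_n-x\|^2/(2T_n)\to\infty$, while $A_{h_n}(\gamma_n)=\phi_{h_n}(x,p_n)\le\mu(\|p_n-x\|)$ grows only linearly. Hence $T_n\to\infty$.

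\textbf{Step 3 (equicontinuity and extraction).} For fixed $t_0$ and $t\le t_0$ we have
\[
\tfrac12\int_0^t\|\dot\gamma_n\|^2\le A_{h_n}(\gamma_n|_{[0,t]})=u_n(\gamma_n(t))-u_n(x)\le\mu(\|\gamma_n(t)-x\|).
\]
Cauchy--Schwarz gives $\|\gamma_n(t)-x\|^2\le 2t\,\mu(\|\gamma_n(t)-x\|)$, which bounds $\|\gamma_n(t)-x\|$ uniformly on $[0,t_0]$ because $\mu$ grows linearly. This in turn gives a uniform $L^2$ bound on $\dot\gamma_n$, so the $\gamma_n$ are uniformly $\tfrac12$-Hölder on $[0,t_0]$. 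Ascoli together with a diagonal argument then produces a subsequence converging locally uniformly on $[0,\infty)$ to an absolutely continuous $\gamma$ with $\gamma(0)=x$, and $\dot\gamma_n\rightharpoonup\dot\gamma$ weakly in $L^2_{\rm loc}$.

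\textbf{Step 4 (passage to the limit; the main obstacle).} Lower semicontinuity of the kinetic term, Fatou's lemma for $U\ge0$ (with $U$ lower semicontinuous as a $[0,\infty]$-valued function), and $h_nt\to ht$ together give
\[
A_h\bigl(\gamma|_{[0,t]}\bigr)\le\liminf_n A_{h_n}\bigl(\gamma_n|_{[0,t]}\bigr)=\lim_n\bigl(u_n(\gamma_n(t))-u_n(x)\bigr)=u(\gamma(t))-u(x).
\]
The last equality uses local uniform convergence of $u_n$ together with the uniform bound on $\gamma_n([0,t])$. Domination from Step 1 gives the reverse inequality, so $\gamma$ calibrates $u$ on $[0,t]$ for every $t>0$.

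The delicate points are the uniform localization of $\gamma_n$ in Step 3 and lower semicontinuity of the singular action in Step 4. Both are handled without any collision analysis, because only $U\ge0$ and lower semicontinuity are needed.
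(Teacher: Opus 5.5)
Your argument is correct, and it takes a different route from the paper's appendix.

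The paper's proof has two stages. First, for each $r>0$ it produces a point $y_r$ with $\|y_r-x\|=r$ and an $h$-minimizer $\gamma_r$ from $x$ to $y_r$ satisfying $u(y_r)-u(x)=A_h(\gamma_r)$. It passes to the limit only in the endpoints $y_n=\gamma_n(\tau_n)\in\partial B(x,r)$, invokes $\phi_{h_n}(x,y_r)\to\phi_h(x,y_r)$, and uses existence of free-time minimizers at the limit energy $h$. Second, it takes a maximal calibrating curve by Zorn's lemma and rules out a finite maximal time through von Zeipel's dichotomy. In the collision case it concatenates a further calibrated segment issued from the limit configuration. The pseudocollision case is excluded by the same Cauchy--Schwarz versus linear growth of $\mu$ estimate that you use to prove $T_n\to\infty$ and to localize $\gamma_n$ on $[0,t_0]$.

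Your direct-method proof instead passes to the limit in the entire minimizers $\gamma_n$ from $x$ to $p_n$, on expanding time intervals. It then closes with lower semicontinuity of the action plus domination. This buys you a lot: you need neither Zorn's lemma, nor von Zeipel's theorem, nor Marchal's theorem, nor existence of minimizers at energy $h$. You also never use continuity of $\phi_h$ in $h$, which the paper asserts rather quickly. The inequality $u(\gamma(t))-u(x)\le A_h(\gamma|_{[0,t]})$ comes from $A_{h_n}(\sigma)\to A_h(\sigma)$ for each fixed curve. The reverse inequality comes from weak lower semicontinuity of the kinetic term and Fatou's lemma, using only that $U\ge 0$ is lower semicontinuous as a $[0,+\infty]$-valued function.

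In exchange, the paper's route isolates a local sphere-reaching calibration lemma valid from any starting point, including collision configurations. For the statement at hand, your argument is shorter and more self-contained.
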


 \begin{rem}\label{Busemann functions exist}
The functions \(u\) produced by Proposition \ref{Maderna: exist calibrating curve on [0,+infty)}
are called horofunctions, and we denote \(\mathscr{B}_h\) the class of all such functions, \(\mathscr{B}_h\) is 
also called Gromov ideal boundary,  it will be shown that \(\mathscr{B}_h\) is non-empty in the proof of Theorem \ref{thm:compactness_GR_Busemann}.

Recall that each horofunction is a global viscosity solution to the H-J equation \Cref{HJ equation}.
   \end{rem}
   
\begin{lem}\label{lem:Busemann_data_dominated}
Every \(u\in\mathscr{B}_h\) is dominated by \(L+h\), we write \(u\prec L+h\).
When \(h_n\equiv h\), the above construction reduces to the standard horofunction/Busemann-type
functions considered in \cite{10.4007/annals.2020.192.2.5}.
\end{lem}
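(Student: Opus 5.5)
The plan is to show that every \(u\in\mathscr B_h\) satisfies \(u(y)-u(x)\le\phi_h(x,y)\) for all \(x,y\in E^N\). This is the same as saying \(u(\gamma(b))-u(\gamma(a))\le A_h(\gamma|_{[a,b]})\) for every absolutely continuous curve \(\gamma\), i.e.\ \(u\prec L+h\). By definition we have
\[
u(x)=\lim_{n\to\infty}\bigl(\phi_{h_n}(0,p_n)-\phi_{h_n}(x,p_n)\bigr)
\]
locally uniformly, with \(h_n\to h\) and \(\|p_n\|\to\infty\). For each \(n\), the triangle inequality for the distance \(\phi_{h_n}\) gives
\[
u_n(y)-u_n(x)=\phi_{h_n}(x,p_n)-\phi_{h_n}(y,p_n)\le\phi_{h_n}(x,y),
\]
where \(u_n\) denotes the \(n\)-th approximant. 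So each \(u_n\) is dominated by \(L+h_n\). The whole problem reduces to passing to the limit on the right-hand side, that is, to proving
\[
\limsup_{n\to\infty}\phi_{h_n}(x,y)\le\phi_h(x,y).
\]

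To get this upper semicontinuity in the energy, fix \(\varepsilon>0\) and choose a curve \(\gamma\in AC(x,y,T)\) with \(A_h(\gamma)\le\phi_h(x,y)+\varepsilon\). For this fixed curve we have the identity
\[
A_{h_n}(\gamma)=A_h(\gamma)+(h_n-h)T,
\]
so
\[
\phi_{h_n}(x,y)\le\phi_h(x,y)+\varepsilon+|h_n-h|\,T .
\]
Let \(n\to\infty\) and then \(\varepsilon\to0\) to obtain the \(\limsup\) bound. The key point is that \(T\) depends on \(\varepsilon\) but not on \(n\), so it is harmless. If \(x=y\) both sides vanish. No existence of minimizers is needed, only the infimum definition. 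Note that the \(\inf_{t}\) representation \(\phi_{h}=\inf_t\{\phi(\cdot,\cdot,t)+ht\}\) gives the same estimate directly. Pointwise convergence \(u_n\to u\) then yields \(u(y)-u(x)\le\phi_h(x,y)\). Applying this to \(x=\gamma(a)\), \(y=\gamma(b)\) and using \(\phi_h(\gamma(a),\gamma(b))\le A_h(\gamma|_{[a,b]})\) gives domination along arbitrary curves.

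For the second sentence: when \(h_n\equiv h\), the approximants are exactly \(\phi_h(0,p_n)-\phi_h(x,p_n)\), with no energy perturbation. These are the normalized horofunctions of \cite{10.4007/annals.2020.192.2.5}, up to the additive normalization at \(0\) and the sign convention \(u=-(\phi_h(\cdot,p_n)-\phi_h(0,p_n))\), so nothing further needs proof.

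The only delicate step is the semicontinuity in \(h\). One might worry that a near-minimizer for \(\phi_h\) requires arbitrarily long time when \(h=0\), because parabolic-type minimizers can be very slow. This is avoided by fixing \(\varepsilon\) first, since \(\phi_h(x,y)<\infty\) by Proposition \ref{uniform bound for phi_hn}. A lower bound on \(\phi_{h_n}\) is not needed for domination.
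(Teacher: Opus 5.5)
Your proof is correct and follows the paper's argument: for each \(n\) the triangle inequality gives \(u_n(y)-u_n(x)\le\phi_{h_n}(x,y)\), and then you pass to the limit. The one difference is that you justify the limit step with an explicit upper-semicontinuity estimate \(\limsup_n\phi_{h_n}(x,y)\le\phi_h(x,y)\), obtained by fixing an \(\varepsilon\)-near-minimizer whose time \(T\) does not depend on \(n\); the paper only writes ``passing to the limit, and using \(h_n\to h\)'' and elsewhere takes continuity of \(h\mapsto\phi_h(x,y)\) for granted, so your version supplies exactly the half of that continuity which domination needs.
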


\begin{proof}
By definition, there exist \(h_n\to h\), \(p_n\in E^N\) with \(\|p_n\|\to+\infty\), and
\[
u_n(x):=\phi_{h_n}(0,p_n)-\phi_{h_n}(x,p_n)
\]
such that \(u_n\to u\) locally uniformly.
For every \(x,y\in E^N\),
\[
u_n(y)-u_n(x)=\phi_{h_n}(x,p_n)-\phi_{h_n}(y,p_n)\le \phi_{h_n}(x,y).
\]
Passing to the limit, and using \(h_n\to h\), we obtain
\[
u(y)-u(x)\le \phi_h(x,y),
\]
hence \(u\prec L+h\).
\end{proof}

\begin{rem}\label{rem:calibrating_implies_hmin}
If \(u\prec L+h\) and \(\gamma\) \(h\)-calibrates \(u\), then \(\gamma\) is an \(h\)-minimizer, i.e.
\[
A_h\!\left(\gamma|_{[a,b]}\right)=\phi_h(\gamma(a),\gamma(b))
\]
for every compact interval \([a,b]\). This is the standard implication recorded in
\cite[Remark 2.14]{10.4007/annals.2020.192.2.5}.
\end{rem}

\begin{thm}[Burgos--Maderna]\label{thm:BM_asymptotics}
Every geodesic ray of the Newtonian \(N\)-body problem with nonnegative energy is expansive. More precisely, if \(\gamma:[0,+\infty)\to\Omega\) is a geodesic ray of energy \(h\ge 0\), then
\[
\gamma(t)=at+O(t^{2/3})\qquad (t\to+\infty)
\]
for some \(a\in E^N\). In particular, the mutual distances along \(\gamma\) grow either with order \(t\) or with order \(t^{2/3}\). If \(h=0\), the motion is completely parabolic; if \(h>0\), it is hyperbolic or partially hyperbolic.
\end{thm}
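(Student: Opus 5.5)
The plan is to derive the asymptotic statement from the Marchal--Saari description of motions defined for all positive times. Minimality along the ray rules out the two bad alternatives in that description: superhyperbolic growth and non-expansive (bounded-type) behaviour. Throughout, let \(\gamma:[0,+\infty)\to\Omega\) be a geodesic ray of energy \(h\ge 0\), parametrized as a free-time minimizer of \(A_h\) (Proposition \ref{free-time minimizer equivalent to geodesic ray}), and set \(l(t)=\|\gamma(t)-\gamma(0)\|\).

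\textbf{Step 1: a priori growth bound from minimality.} Since \(\gamma\) is a free-time minimizer,
\[
A_h(\gamma|_{[0,t]})=\phi_h(\gamma(0),\gamma(t)).
\]
On the energy level, \(\tfrac12\|\dot\gamma\|^2=U+h\), so \(A_h(\gamma|_{[0,t]})=\int_0^t\|\dot\gamma\|^2\,ds\). By Cauchy--Schwarz this is at least \(l(t)^2/t\). Proposition \ref{uniform bound for phi_hn}, applied with the constant sequence \(h_n\equiv h\), bounds the same quantity above by \((\alpha l+\beta l^2)^{1/2}\). Hence
\[
\frac{l(t)^2}{t}\le \left(\alpha\, l(t)+\beta\, l(t)^2\right)^{1/2}.
\]
\begin{itemize}
\item If \(h>0\) and \(l(t)\) is large, the right-hand side is at most \(C\,l(t)\), so \(l(t)\le Ct\).
\item If \(h=0\), then \(\beta=0\) (since \(\beta=4C_1M\) and we may take \(M\) arbitrarily small). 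This gives \(l^{3/2}\le \sqrt\alpha\, t\), i.e. \(l(t)\le Ct^{2/3}\).
\end{itemize}
In particular, the motion is never superhyperbolic.

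\textbf{Step 2: the ray is unbounded and the motion is expansive.} Suppose \(\|\gamma(t)\|\) stayed bounded along a sequence \(t_k\to\infty\) (or entirely). Then \(U\) is bounded below by a positive constant on the corresponding configurations, because all mutual distances are bounded above. From this I expect \(A_h(\gamma|_{[0,t_k]})\ge ct_k\). Meanwhile \(\phi_h(\gamma(0),\gamma(t_k))\le\mu(l(t_k))\) stays bounded, which is a contradiction. The same argument must be refined to exclude partial clusters that stay bounded while the configuration diameter grows (oscillatory or capture-type behaviour). For that I would invoke the Marchal--Saari theorem: for a motion defined on \([0,+\infty)\) which is not superhyperbolic, there is \(a\in E^N\) with \(\gamma(t)=at+O(t^{2/3})\), and the clusters are described by the zero entries of the differences \(a_i-a_j\).

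\textbf{Step 3: minimality upgrades this to expansiveness.} The main obstacle is to show that within each cluster of Step 2 the internal distances still diverge, at rate \(t^{2/3}\). I would argue by contradiction. If some internal mutual distance stayed bounded on a sequence of times, I would build a competitor: break the cluster apart into a parabolic sub-configuration while keeping the other bodies on their paths. I would then compare actions using the estimate \eqref{bound for phi(x,y,t)} on the cluster's subsystem; since the cluster is bounded, the competitor should save action proportional to the elapsed time, contradicting minimality. This is the delicate part, essentially the content of Burgos--Maderna.

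Finally, the energy identity in the limit gives \(\tfrac12\|a\|^2=h\). So \(h=0\) forces \(a=0\) and the motion is completely parabolic. If \(h>0\), then \(a\neq 0\) and the motion is hyperbolic, or partially hyperbolic when \(a\) has collisions.
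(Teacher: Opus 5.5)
The paper gives no proof of this statement. It quotes it from Burgos--Maderna and uses it as a black box; downstream it only needs $\|\gamma(t)\|\to+\infty$. Judged on its own, your proposal has a genuine gap exactly where the theorem has its content.

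\textbf{What works.} Step 1 plus the Marchal--Saari theorem is correct and standard. The bound $l(t)^2/t\le A_h(\gamma|_{[0,t]})=\phi_h(\gamma(0),\gamma(t))\le(\alpha l+\beta l^2)^{1/2}$, essentially the comparison in the paper's appendix, excludes superhyperbolic growth. So $\gamma(t)=at+O(t^{2/3})$, and for $h=0$ it even forces $a=0$.

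\textbf{Step 2 needs repair.} Boundedness of $\gamma(t_k)$ bounds $U$ from below only at the times $t_k$, not $A_h(\gamma|_{[0,t_k]})\ge ct_k$. Use $A_h\ge ht$ when $h>0$. When $h=0$, use $U\ge c/R$ together with $R(\gamma(s))=O(1+s^{2/3})$ from Step 1, which gives $A_0(\gamma|_{[0,t]})\gtrsim t^{1/3}$. Either way you get $\|\gamma(t)\|\to\infty$, which is all the paper later uses.

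\textbf{The gap: expansiveness is not proved.} The theorem asserts that every $r_{ij}(t)\to+\infty$, with order $t^{2/3}$ when $a_i=a_j$. Step 3 is only a heuristic, and as described it does not work.
\begin{enumerate}
\item Non-expansiveness only gives $\liminf_{t\to\infty} r_{ij}(t)<\infty$. The pair may be close only on a sparse set of times, so there is no bound $\int_0^{t_k}U\ge ct_k$ and no action saving proportional to elapsed time. Oscillatory behaviour is not addressed.
\item Even for a pair bounded on all of $[T_0,+\infty)$, your competitor must keep the endpoints $\gamma(0),\gamma(t_k)$, where the pair is again close. So it must expand and re-contract the sub-cluster in fixed time. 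That part is feasible with action $O(t_k^{1/3})$, via $\phi(x,x,T)\le C_1l^2/T+C_2T/l$ with $l\sim T^{2/3}$.
\item In the same competitor you must also control the interaction of the expanded bodies with the rest of their Marchal--Saari cluster. Those bodies are only at distance $O(t^{2/3})$, the same scale as the expansion. None of this is carried out.
\item Even $r_{ij}\to\infty$ would not give the lower rate $r_{ij}\gtrsim t^{2/3}$ that the statement claims.
\end{enumerate}

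\textbf{The closing paragraph depends on the missing step.} The identity $\tfrac12\|a\|^2=h$ needs $U(\gamma(t))\to 0$ and $\dot\gamma(t)\to a$, which is expansiveness. A surviving bound cluster with internal energy $e<0$ gives $\tfrac12\|a\|^2=h-e>h$. Also, $a=0$ alone does not make the motion completely parabolic. These facts are precisely the content of da Luz--Maderna (for $h=0$) and Burgos--Maderna (for $h>0$). As written, your argument proves only the Marchal--Saari asymptotics and unboundedness, not the theorem.
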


\section{Compactness of geodesic-ray data and normalized Busemann functions}\label{sec:compactness}

We give the proof of \Cref{thm:compactness_GR_Busemann} and of which the results in this section.
before that, 
we need some vital technic in the following lemma.

\begin{lem}\label{lem:limit_not_collision}
Let \((x_n,v_n)\subset GR\) and assume that
\[
(x_n,v_n)\to (x_0,v_0)\qquad\text{in }TE^N.
\]
Then \(x_0\in\Omega\).
\end{lem}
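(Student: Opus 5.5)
The plan is to reduce the claim to a bound on the potential coming from the energy relation. No variational argument should be needed at this step. The key observation is that every element of \(GR\) generates a geodesic ray of some Jacobi--Maupertuis metric \(j_h\) with \(h\ge 0\), so its energy is nonnegative. Since the energy is conserved along the classical solution, this nonnegativity already holds at the initial datum:
\[
h_n=\tfrac12\|v_n\|^2-U(x_n)\ \ge\ 0 \qquad\text{for every } n .
\]

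\emph{First step: justify \(h_n\ge 0\).} In the framework of Section~\ref{Variational background}, geodesic rays and free-time minimizers are only defined for \(j_h\), \(A_h\) and \(\phi_h\) with \(h\ge 0\). By Proposition~\ref{free-time minimizer equivalent to geodesic ray}, \(\gamma_n\) is therefore a free-time minimizer of \(A_{h}\) for some \(h\ge 0\). Its energy \(\frac12\|\dot\gamma_n\|^2-U(\gamma_n)\) equals that \(h\), because free-time minimizers of \(A_h\) move at energy \(h\); this is the standard Lagrangian energy condition from varying the time parameter. If one wanted to be self-contained on this point, one can also appeal to Theorem~\ref{thm:BM_asymptotics}. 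That theorem shows a ray is expansive, and expansive motions cannot have negative energy, since negative energy forces \(U(\gamma_n(t))\ge -h_n>0\) for all \(t\), incompatible with all mutual distances tending to infinity. I expect this bookkeeping about which energy level the ray belongs to to be the only delicate point, and I would settle it by the convention above.

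\emph{Second step: bound the potential.} From \(h_n\ge 0\) we get
\[
U(x_n)\le \tfrac12\|v_n\|^2 .
\]
Because \(v_n\to v_0\), the right-hand side is bounded, say by \(M<\infty\). Hence \(U(x_n)\le M\) for all \(n\).

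\emph{Third step: conclude.} Suppose \(x_0\in\Delta\), so \(x_{0,i}=x_{0,j}\) for some pair \(i<j\). Then \(|x_{n,i}-x_{n,j}|\to 0\). Since all terms of \(U\) are positive,
\[
U(x_n)\ \ge\ \frac{m_im_j}{|x_{n,i}-x_{n,j}|}\ \longrightarrow\ +\infty ,
\]
which contradicts \(U(x_n)\le M\). Therefore \(x_0\in\Omega\).

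As a by-product, \(U\) is continuous at \(x_0\), so \(h_n\to \frac12\|v_0\|^2-U(x_0)\ge 0\). This gives the remaining part of item~1 of Theorem~\ref{thm:compactness_GR_Busemann}.
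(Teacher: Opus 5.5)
Your proof is correct and matches the paper's argument: nonnegativity of the energy $h_n$ gives $U(x_n)\le \frac12\|v_n\|^2$, which stays bounded because $v_n\to v_0$, whereas a collision at $x_0$ would force $U(x_n)\to+\infty$. The only difference is how you justify $h_n\ge 0$. You rely on the convention that geodesic rays are defined only for the metrics $j_h$ with $h\ge 0$, while the paper cites Theorem~\ref{thm:BM_asymptotics}; that theorem itself assumes $h\ge 0$, so your fallback through it is no more self-contained than the convention.
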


\begin{proof}
For each \(n\), let \(\gamma_n:[0,+\infty)\to\Omega\) be the geodesic ray with
\[
\gamma_n(0)=x_n,\qquad \dot\gamma_n(0)=v_n.
\]
By Theorem \ref{thm:BM_asymptotics}, the total energy
\[
h_n=\frac12\|v_n\|^2-U(x_n)
\]
of \(\gamma_n\) satisfies \(h_n\ge 0\).
Therefore
\[
U(x_n)=\frac12\|v_n\|^2-h_n\le \frac12\|v_n\|^2.
\]
Since \(v_n\to v_0\), the sequence \(\{U(x_n)\}\) is bounded.
If \(x_0\in\Delta\), then \(U(x_n)\to+\infty\), a contradiction.
Hence \(x_0\in\Omega\).
\end{proof}

Now we can conduct the proof of Theorem \ref{thm:compactness_GR_Busemann}

\begin{proof}[Proof of Theorem \ref{thm:compactness_GR_Busemann}]
By Lemma \ref{lem:limit_not_collision}, we have \(x_0\in\Omega\).
Hence
\[
h_n=\frac12\|v_n\|^2-U(x_n)\to \frac12\|v_0\|^2-U(x_0)=:h,
\]
and since each \(h_n\ge 0\), it follows that \(h\ge 0\).

For each \(n\), Theorem \ref{thm:BM_asymptotics} gives
\[
\gamma_n(t)=a_n t+O(t^{2/3}) \qquad \text{as } t\to+\infty,
\]
hence \(\|\gamma_n(t)\|\to+\infty\) as \(t\to+\infty\).
Choose \(t_n\ge n\) such that
\[
p_n:=\gamma_n(t_n),\qquad \|p_n\|\ge n.
\]
Then \(\|p_n\|\to+\infty\).

Define
\[
u_n(x):=\phi_{h_n}(0,p_n)-\phi_{h_n}(x,p_n).
\]
By Proposition \ref{uniform bound for phi_hn},
\[
|u_n(x)-u_n(y)|\le \phi_{h_n}(x,y)\le \mu(\|x-y\|)
\]
for all \(x,y\in E^N\), where \(\mu\) depends only on a common bound for \(\{h_n\}\).
Thus \(\{u_n\}\) is equicontinuous and locally bounded on \(E^N\).
By Ascoli's theorem, after passing to a subsequence, we may assume that
\[
u_n\to u
\qquad \text{locally uniformly on }E^N.
\]
Since \(h_n\to h\) and \(\|p_n\|\to+\infty\), the locally uniform limit \(u\) belongs to
\(\mathscr{B}_h\) by definition and hence $u$ is dominated by $L+h$ according to Lemma \ref{lem:Busemann_data_dominated}.

Let \(\gamma:[0,t_*)\to\Omega\) be the maximal classical solution with initial datum
\[
\gamma(0)=x_0,\qquad \dot\gamma(0)=v_0.
\]
Fix \(T<t_*\).
By continuous dependence on initial data,
\[
\gamma_n\to\gamma,\qquad \dot\gamma_n\to\dot\gamma
\]
uniformly on \([0,T]\).

Since \(\gamma([0,T])\) is compact and contained in \(\Omega\), for all sufficiently large \(n\),
also \(\gamma_n([0,T])\) stays in a compact subset of \(\Omega\).
Therefore, for every \(t\in[0,T]\),
\[
A_{h_n}\!\left(\gamma_n\big|_{[0,t]}\right)\to
A_h\!\left(\gamma\big|_{[0,t]}\right).
\]

On the other hand, \(\gamma_n\) \(h_n\)-calibrates \(u_n\), so
\[
u_n(\gamma_n(t))-u_n(x_n)=A_{h_n}\!\left(\gamma_n\big|_{[0,t]}\right),
\qquad t\in[0,T].
\]
Passing to the limit, using \(x_n\to x_0\), \(\gamma_n(t)\to\gamma(t)\), and the local uniform
convergence \(u_n\to u\), we obtain
\[
u(\gamma(t))-u(x_0)=A_h\!\left(\gamma\big|_{[0,t]}\right),
\qquad t\in[0,T].
\]
Since \(T<t_*\) is arbitrary, \(\gamma\) \(h\)-calibrates \(u\) on \([0,t_*)\).

We now prove that \(t_*=+\infty\).
Assume by contradiction that \(t_*<+\infty\), and fix \(t'\in(0,t_*)\).
Since \(u\in\mathscr{B}_h\), Proposition \ref{Maderna: exist calibrating curve on [0,+infty)}
provides an  curve \(h\)-calibrating $u$
\[
\rho:[0,+\infty)\to E^N,\qquad \rho(0)=\gamma(t').
\]
Concatenating \(\gamma|_{[0,t']}\) with \(\rho\), we obtain a global \(h\)-calibrating curve
\(\widetilde\gamma\).
By Remark \ref{rem:calibrating_implies_hmin}, \(\widetilde\gamma\) is a free-time minimizer.
Since \(\gamma(t')\in\Omega\), Marchal's theorem implies that \(\widetilde\gamma\) is a classical solution
near \(t'\), hence there is no corner at \(t'\), and therefore
\[
\dot\rho(0)=\dot\gamma(t').
\]
By uniqueness of classical solutions, \(\rho(s)=\gamma(t'+s)\) as long as both curves are defined,
which extends \(\gamma\) beyond \(t_*\), a contradiction.
Thus \(t_*=+\infty\).

Finally, since $u$ is dominated by $L+h$,
By Remark \ref{rem:calibrating_implies_hmin}, \(\gamma\) is a free-time minimizer of
\(A_h\), and by Proposition \ref{free-time minimizer equivalent to geodesic ray}, \(\gamma\) is a geodesic ray.
Hence \((x_0,v_0)\in GR\).

This proves all the assertions of Theorem \ref{thm:compactness_GR_Busemann}.

\end{proof}

The preceding theorem is a stability statement for calibrated motions. 
It shows that the class of initial data selected by the Jacobi--Maupertuis variational principle is closed under ambient 
convergence and that no collision can appear in the limit. We now record a companion compactness statement for the boundary 
data at infinity. This is the Hamilton--Jacobi counterpart of the previous dynamical compactness: along a convergent sequence 
of geodesic-ray initial data, the exact normalized Busemann functions form a locally compact family, and every cluster point is 
still a normalized horofunction calibrating the limiting ray, and more importantly, is a viscosity solution of the limiting Hamilton--Jacobi
 equation.

For a geodesic ray \(\gamma:[0,+\infty)\to\Omega\) of energy \(h\ge 0\), we denote by
\[
\widehat b_\gamma(x)
:=
\lim_{t\to+\infty}\Bigl(\phi_h(0,\gamma(t))-\phi_h(x,\gamma(t))\Bigr)
\]
its normalized Busemann function in the sign convention used in this paper. Equivalently,
\[
\widehat b_\gamma(x)=b_\gamma(0)-b_\gamma(x),
\]
where \(b_\gamma\) denotes the standard Busemann function.

\begin{prop}\label{prop:precompact_busemann_data}
Let \((x_n,v_n)\subset GR\) and assume that
\[
(x_n,v_n)\to (x_0,v_0)\qquad\text{in }TE^N.
\]
For each \(n\), let \(\gamma_n:[0,+\infty)\to\Omega\) be the corresponding geodesic ray, and let
\[
h_n=\frac12\|v_n\|^2-U(x_n).
\]
Then the family of normalized Busemann functions \(\{\widehat b_{\gamma_n}\}\) is relatively
compact in \(C_{\mathrm{loc}}(E^N)\).
 
More precisely, after passing to a subsequence, there exists a function
\[
u\in \mathscr{B}_h,
\qquad
h=\frac12\|v_0\|^2-U(x_0),
\]
such that
\[
\widehat b_{\gamma_n}\to u
\qquad\text{locally uniformly on }E^N.
\]
Moreover, if \(\gamma\) is the geodesic ray with initial datum \((x_0,v_0)\), then \(\gamma\)
\(h\)-calibrates \(u\).
\end{prop}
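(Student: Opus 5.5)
The plan is to follow the proof of Theorem \ref{thm:compactness_GR_Busemann}, with the exact normalized Busemann functions \(\widehat b_{\gamma_n}\) playing the role of the truncated functions \(u_n\). By Lemma \ref{lem:limit_not_collision}, \(x_0\in\Omega\), so \(h_n\to h\ge 0\) and \(\{h_n\}\) is bounded by some \(M\).

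\textbf{Equicontinuity and compactness.} For fixed \(n\) and \(t\), the function \(x\mapsto \phi_{h_n}(0,\gamma_n(t))-\phi_{h_n}(x,\gamma_n(t))\) vanishes at \(0\). By the triangle inequality for the distance \(\phi_{h_n}\), its oscillation between \(x\) and \(y\) is at most \(\phi_{h_n}(x,y)\). Passing to the limit \(t\to+\infty\) gives
\[
|\widehat b_{\gamma_n}(x)-\widehat b_{\gamma_n}(y)|\le \phi_{h_n}(x,y)\le \mu(\|x-y\|)
\]
by Proposition \ref{uniform bound for phi_hn}, and also \(\widehat b_{\gamma_n}(0)=0\). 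So the family is equicontinuous and locally bounded, and Ascoli's theorem gives relative compactness in \(C_{\mathrm{loc}}(E^N)\). We then extract a subsequence with \(\widehat b_{\gamma_n}\to u\) locally uniformly.

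\textbf{Membership in \(\mathscr B_h\).} This is the main obstacle. The limit \(u\) is a limit of limits, whereas \(\mathscr B_h\) is defined through single sequences \(\phi_{h_n}(0,p_n)-\phi_{h_n}(\cdot,p_n)\). I would use a diagonal argument.
\begin{itemize}
\item For each \(n\), the convergence defining \(\widehat b_{\gamma_n}\) is locally uniform in \(x\), by the same equicontinuity and the monotonicity in \(t\) of the Busemann quantity. Hence we may choose \(t_n\ge n\) such that
\[
\sup_{\|x\|\le n}\Bigl|\phi_{h_n}(0,\gamma_n(t_n))-\phi_{h_n}(x,\gamma_n(t_n))-\widehat b_{\gamma_n}(x)\Bigr|<\frac1n .
\]
\item By Theorem \ref{thm:BM_asymptotics}, \(\|\gamma_n(t)\|\to+\infty\) as \(t\to+\infty\), so we can additionally require \(\|\gamma_n(t_n)\|\ge n\).
\item Set \(p_n=\gamma_n(t_n)\). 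Then \(\phi_{h_n}(0,p_n)-\phi_{h_n}(\cdot,p_n)\to u\) locally uniformly, with \(h_n\to h\) and \(\|p_n\|\to\infty\). Thus \(u\in\mathscr B_h\), and \(u\prec L+h\) by Lemma \ref{lem:Busemann_data_dominated}.
\end{itemize}
The delicate point is justifying the locally uniform (not just pointwise) convergence in \(t\) for fixed \(n\). If monotonicity is awkward, equicontinuity together with pointwise convergence already upgrades to uniform convergence on compacts, and that suffices.

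\textbf{Calibration of the limit ray.} A geodesic ray \(h_n\)-calibrates its own normalized Busemann function. Indeed, for \(s<t\), minimality along the ray gives
\[
\phi_{h_n}(\gamma_n(s),\gamma_n(T))-\phi_{h_n}(\gamma_n(t),\gamma_n(T))=A_{h_n}\bigl(\gamma_n|_{[s,t]}\bigr)
\]
for every \(T>t\). Letting \(T\to\infty\) yields \(\widehat b_{\gamma_n}(\gamma_n(t))-\widehat b_{\gamma_n}(\gamma_n(s))=A_{h_n}(\gamma_n|_{[s,t]})\).

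By Theorem \ref{thm:compactness_GR_Busemann}, the maximal solution \(\gamma\) with datum \((x_0,v_0)\) is global. On each \([0,T]\), continuous dependence gives \(\gamma_n\to\gamma\) in \(C^1\), with the curves staying in a compact subset of \(\Omega\). Hence \(A_{h_n}(\gamma_n|_{[s,t]})\to A_h(\gamma|_{[s,t]})\). Passing to the limit in the calibration identity, using local uniform convergence of \(\widehat b_{\gamma_n}\), shows that \(\gamma\) \(h\)-calibrates \(u\) on \([0,\infty)\).
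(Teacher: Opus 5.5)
Your proposal is correct and follows essentially the same route as the paper. Both arguments use three steps: Ascoli's theorem via the uniform modulus \(\mu\) from Proposition \ref{uniform bound for phi_hn}; a diagonal choice \(p_n=\gamma_n(t_n)\) to place \(u\) in \(\mathscr{B}_h\); and passage to the limit in the self-calibration identity of each \(\gamma_n\) using continuous dependence on compact time intervals. Two of your steps are in fact slightly more careful than the paper's corresponding remarks: you explicitly impose \(\|\gamma_n(t_n)\|\ge n\), and you justify the locally uniform convergence in \(t\) through equicontinuity.
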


\begin{proof}
By Theorem \ref{thm:compactness_GR_Busemann}, we already know that
\[
x_0\in\Omega,\qquad h_n\to h:=\frac12\|v_0\|^2-U(x_0)\ge 0,
\]
and that the classical solution with initial datum \((x_0,v_0)\) is a geodesic ray, which we
denote by \(\gamma\).

For every \(n\), the function \(\widehat b_{\gamma_n}\) satisfies
\[
\widehat b_{\gamma_n}(0)=0,
\]
and for all \(x,y\in E^N\),
\[
\bigl|\widehat b_{\gamma_n}(y)-\widehat b_{\gamma_n}(x)\bigr|
\le \phi_{h_n}(x,y).
\]
Since \((h_n)\) is bounded, Proposition \ref{uniform bound for phi_hn} provides a common modulus
of continuity. Hence the family \(\{\widehat b_{\gamma_n}\}\) is locally bounded and equicontinuous.
By Ascoli's theorem, after passing to a subsequence, there exists
\[
u\in C^0(E^N)
\]
such that
\[
\widehat b_{\gamma_n}\to u
\qquad\text{locally uniformly on }E^N.
\]

We now prove that \(u\in\mathscr{B}_h\).
For each \(n\), the convergence
\[
\phi_{h_n}(0,\gamma_n(t))-\phi_{h_n}(x,\gamma_n(t))
\longrightarrow
\widehat b_{\gamma_n}(x)
\qquad (t\to+\infty)
\]
is locally uniform in \(x\). Therefore we can choose \(t_n\ge n\) such that
\[
\sup_{\|x\|\le n}
\left|
\widehat b_{\gamma_n}(x)
-
\Bigl(\phi_{h_n}(0,\gamma_n(t_n))-\phi_{h_n}(x,\gamma_n(t_n))\Bigr)
\right|
\le \frac1n.
\]
Set
\[
p_n:=\gamma_n(t_n).
\]
Since each \(\gamma_n\) is a geodesic ray, it is expansive, hence \(\|p_n\|\to+\infty\).
Now let \(K\subset E^N\) be compact. For \(n\) large enough, \(K\subset B(0,n)\), and thus
\[
\sup_{x\in K}
\left|
\Bigl(\phi_{h_n}(0,p_n)-\phi_{h_n}(x,p_n)\Bigr)-u(x)
\right|
\le
\frac1n
+
\sup_{x\in K}|\widehat b_{\gamma_n}(x)-u(x)|.
\]
Letting \(n\to\infty\), we obtain
\[
u(x)=\lim_{n\to\infty}\Bigl(\phi_{h_n}(0,p_n)-\phi_{h_n}(x,p_n)\Bigr)
\]
locally uniformly on \(E^N\). Hence \(u\in\mathscr{B}_h\).

It remains to prove that \(\gamma\) \(h\)-calibrates \(u\). For each \(n\) and every \(t\ge 0\),
the geodesic-ray property of \(\gamma_n\) implies
\[
\phi_{h_n}(x_n,\gamma_n(s))
=
\phi_{h_n}(x_n,\gamma_n(t))
+
\phi_{h_n}(\gamma_n(t),\gamma_n(s))
\qquad \text{for all }s>t.
\]
Passing to the limit as \(s\to+\infty\), we obtain
\[
\widehat b_{\gamma_n}(\gamma_n(t))-\widehat b_{\gamma_n}(x_n)
=
\phi_{h_n}(x_n,\gamma_n(t))
=
A_{h_n}\!\left(\gamma_n\big|_{[0,t]}\right).
\]
Fix \(T>0\). Since \((x_n,v_n)\to(x_0,v_0)\) and \(x_0\in\Omega\), continuous dependence on
initial data gives
\[
\gamma_n\to\gamma,\qquad \dot\gamma_n\to\dot\gamma
\]
uniformly on \([0,T]\). As in the proof of Theorem \ref{thm:compactness_GR_Busemann}, it
follows that
\[
A_{h_n}\!\left(\gamma_n\big|_{[0,t]}\right)
\to
A_h\!\left(\gamma\big|_{[0,t]}\right)
\qquad\text{for every }t\in[0,T].
\]
Passing to the limit in
\[
\widehat b_{\gamma_n}(\gamma_n(t))-\widehat b_{\gamma_n}(x_n)
=
A_{h_n}\!\left(\gamma_n\big|_{[0,t]}\right),
\]
we get
\[
u(\gamma(t))-u(x_0)=A_h\!\left(\gamma\big|_{[0,t]}\right)
\qquad\text{for all }t\in[0,T].
\]
Since \(T>0\) is arbitrary, \(\gamma\) \(h\)-calibrates \(u\).
\end{proof}

\begin{rem}\label{rem:widehatB_vs_tildeB}
Proposition \ref{prop:precompact_busemann_data} does not assert that the collection of exact normalized Busemann functions is
 closed in \(C_{\mathrm{loc}}(E^N)\). What is proved is the natural precompactness statement: every sequence of exact
  normalized Busemann functions associated with convergent geodesic-ray data has a locally uniformly convergent subsequence, 
  and each cluster point belongs to the horofunction class \(\mathscr B_h\). Thus \(\mathscr B_h\) can be viewed as a convenient closure class for normalized Busemann data.
\end{rem}

The compactness argument Theorem \ref{thm:compactness_GR_Busemann} 
and Proposition \ref{prop:precompact_busemann_data} has  two topological 
consequences. First, the full geodesic-ray set is closed in the ambient phase space. 
Second, the limit horofunction obtained by $-u = -\lim_{n\to\infty}\widehat{b}_{\gamma_n}$ is
a viscosity solution of $H=h$.
Third, after fixing a collision-free hyperbolic limit shape,
 the corresponding slice is also closed. The latter statement uses the uniqueness of 
 the fixed-shape Busemann function proved by Maderna--Venturelli.

\begin{cor}\label{cor:GR_closed}
The set \(GR\) is closed in the ambient space \(TE^N\).
\end{cor}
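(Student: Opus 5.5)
The closedness of \(GR\) in \(TE^N\) is essentially a repackaging of \Cref{thm:compactness_GR_Busemann}, so the plan is to verify the sequential characterization of closedness. Since \(TE^N\simeq E^N\times E^N\) is a metric space, it suffices to show that whenever \((x_n,v_n)\in GR\) and \((x_n,v_n)\to(x_0,v_0)\) in \(TE^N\), the limit \((x_0,v_0)\) again lies in \(GR\).

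First, by Lemma \ref{lem:limit_not_collision} (equivalently, assertion 1 of \Cref{thm:compactness_GR_Busemann}) we get \(x_0\in\Omega\). This is the only point where the ambient space \(TE^N\) differs from \(T\Omega\): a priori the limit could land on the collision set \(\Delta\), where \((x_0,v_0)\) would not even be a classical Cauchy datum. It is excluded because energy nonnegativity forces \(U(x_n)\le \tfrac12\|v_n\|^2\) to stay bounded. Hence \((x_0,v_0)\in T\Omega\), and the maximal classical solution \(\gamma:[0,t_*)\to\Omega\) with this datum is well defined.

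Second, assertion 3 of \Cref{thm:compactness_GR_Busemann} yields the remaining facts. It gives \(t_*=+\infty\) and shows that \(\gamma\) \(h\)-calibrates a horofunction \(u\in\mathscr B_h\). By Lemma \ref{lem:Busemann_data_dominated}, \(u\) is dominated by \(L+h\). Remark \ref{rem:calibrating_implies_hmin} then makes \(\gamma\) a free-time minimizer of \(A_h\), and Proposition \ref{free-time minimizer equivalent to geodesic ray} makes it a geodesic ray. So the maximal solution from \((x_0,v_0)\) is a geodesic ray, which is precisely the membership condition \((x_0,v_0)\in GR\).

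I expect no real obstacle, since all the analytic work (collision exclusion, global existence, calibration in the limit) is already done in \Cref{thm:compactness_GR_Busemann}. The one point to state carefully is conventions. Membership in \(GR\) is defined via the maximal classical solution, and the theorem asserts that this maximal solution is global and is a geodesic ray after the reparameterization of Proposition \ref{free-time minimizer equivalent to geodesic ray}. So no mismatch arises between the time parameterization of free-time minimizers and the arclength parameterization of \(j_h\)-geodesics.
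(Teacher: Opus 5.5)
Your proposal is correct and follows the paper's route. The paper's proof just says the corollary is immediate from Theorem~\ref{thm:compactness_GR_Busemann}, and you spell out that reduction via sequential closedness: assertion 1 (equivalently Lemma~\ref{lem:limit_not_collision}) puts $(x_0,v_0)$ in $T\Omega$, and assertion 3 shows that the maximal solution from $(x_0,v_0)$ is a global geodesic ray, so $(x_0,v_0)\in GR$.
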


\begin{proof}
This is immediate from Theorem \ref{thm:compactness_GR_Busemann}.
\end{proof}

\begin{cor}[Hamilton--Jacobi interpretation of the limiting horofunction]
\label{cor:HJ_interpretation_horofunction}
Let \((x_n,v_n)\subset GR\), $h_n, h, \widehat b_{\gamma_n}$ and its limit $u$ be as in Proposition
\ref{prop:precompact_busemann_data}, then
\(-\widehat b_{\gamma_n}\) and $-u$
are viscosity solutions of
\[
H(x,Du)=h_n~\text{  and }~ H(x,Du)=h
\]
on \(\Omega\) respectively.

\end{cor}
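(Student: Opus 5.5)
The plan is to prove the two assertions separately. First I show directly that each \(-\widehat b_{\gamma_n}\) is a viscosity solution of \(H(x,Du)=h_n\) on \(\Omega\). Then I transfer this to \(-u\) using the stability of viscosity solutions under locally uniform convergence, combined with \(h_n\to h\) and the fact that \(H\) is continuous on \(\Omega\times(E^N)^*\).

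For fixed \(n\), write \(w_n:=-\widehat b_{\gamma_n}\). Then \(w_n(x)=\lim_{t\to+\infty}\bigl(\phi_{h_n}(x,\gamma_n(t))-\phi_{h_n}(0,\gamma_n(t))\bigr)\), so \(w_n=b_{\gamma_n}+\mathrm{const}\). This is the standard Busemann function of the ray in \((E^N,\phi_{h_n})\).

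\emph{Subsolution property.} By the inequality \(|\widehat b_{\gamma_n}(y)-\widehat b_{\gamma_n}(x)|\le\phi_{h_n}(x,y)\) established in the proof of Proposition \ref{prop:precompact_busemann_data}, we have \(w_n(x)-w_n(y)\le\phi_{h_n}(y,x)\). So \(w_n\) is dominated by \(L+h_n\) in the backward direction. Let \(\psi\in C^1\) touch \(w_n\) from above at \(x\in\Omega\). For any \(v\), take a short \(C^1\) curve \(\sigma(s)=x-sv\), \(s\in[0,\varepsilon]\), lying in \(\Omega\). Domination gives \(w_n(x)-w_n(x-\varepsilon v)\le\int_0^\varepsilon (L(\sigma,\dot\sigma)+h_n)\,ds\). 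Dividing by \(\varepsilon\) and using the touching yields \(D\psi(x)v\le L(x,v)+h_n\). Taking the supremum over \(v\) (the Legendre transform) gives \(H(x,D\psi(x))\le h_n\).

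\emph{Supersolution property.} Here I use calibrating curves. For every \(x\in\Omega\), a horofunction argument as in Proposition \ref{Maderna: exist calibrating curve on [0,+infty)} produces a curve \(\rho\) with \(\rho(0)=x\) such that \(w_n(x)-w_n(\rho(t))=A_{h_n}(\rho|_{[0,t]})\). In the sign convention for \(w_n\), one may instead parametrize \(\rho\) backward. This works because \(\widehat b_{\gamma_n}\in\mathscr B_{h_n}\), via the constant sequence \(h_n\) and \(p_k=\gamma_n(t_k)\). By Marchal's theorem, \(\rho\) is a classical solution for small \(t>0\) and is \(C^1\) up to \(t=0\) with some velocity \(v\); here Remark \ref{rem:calibrating_implies_hmin} and the fact that \(x\in\Omega\) are used. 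If \(\psi\) touches \(w_n\) from below at \(x\), then the calibration identity, divided by \(t\to0^+\), gives \(-D\psi(x)v\ge L(x,v)+h_n\) up to sign conventions. Hence \(H(x,D\psi(x))\ge h_n\), by the equality case in the Fenchel inequality \(pv\le L+H\). Alternatively, both properties are exactly the content of Remark \ref{Busemann functions exist}, which records that every horofunction (with the sign flip) is a viscosity solution. So I would simply invoke that together with \(\widehat b_{\gamma_n}\in\mathscr B_{h_n}\) and \(u\in\mathscr B_h\). This also gives the claim for \(-u\) directly, without any stability argument.

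\textbf{Main obstacle.} The main obstacle is the sign bookkeeping. Horofunctions \(u\) in \(\mathscr B_h\) calibrate forward curves via \(u(\gamma(t))-u(x)=A_h\), which is the convention for \emph{forward} weak KAM solutions of \(H(x,Du)=h\). The change to \(-u\) converts these into the backward convention in which viscosity solutions are characterized. I would check this carefully with the test-function computation above, using the reversibility \(L(x,-v)=L(x,v)\) and \(\phi_h(x,y)=\phi_h(y,x)\).

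For the limit \(-u\), I would also verify the result via stability: \(-\widehat b_{\gamma_n}\to-u\) locally uniformly, \(H(x,p)-h_n\to H(x,p)-h\) locally uniformly on \(\Omega\times(E^N)^*\), and the standard stability lemma for viscosity solutions then applies.
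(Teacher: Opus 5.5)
Your proposal is correct and follows the paper's route: domination gives the subsolution property, and the reversed calibrating curves from Proposition~\ref{Maderna: exist calibrating curve on [0,+infty)} (available because $\widehat b_{\gamma_n}\in\mathscr B_{h_n}$) give the supersolution property, with stability then passing to $-u$; you simply work out by hand the test-function arguments that the paper cites from Maderna--Venturelli. Two small remarks: the supersolution step uses only the Fenchel inequality $pv\le L(x,v)+H(x,p)$ with $p=-D\psi(x)$, together with evenness of $H$ in $p$, and not its equality case; and your observation that the same direct argument applies verbatim to $u\in\mathscr B_h$, via Lemma~\ref{lem:Busemann_data_dominated} and that proposition, is valid and makes the stability step unnecessary.
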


\begin{proof}
Set
\[
w_n:=-\widehat b_{\gamma_n}.
\]
Then
\[
w_n(x)=
\lim_{t\to+\infty}
\bigl(\phi_{h_n}(x,\gamma_n(t))-\phi_{h_n}(0,\gamma_n(t))\bigr),
\]
which is the standard normalized Busemann function associated with the geodesic ray \(\gamma_n\).

We first prove the subsolution property. For all \(x,y\in E^N\), the triangle inequality gives
\[
\phi_{h_n}(y,\gamma_n(t))-\phi_{h_n}(x,\gamma_n(t))
\le \phi_{h_n}(y,x)=\phi_{h_n}(x,y).
\]
Letting \(t\to+\infty\), we obtain
\[
w_n(y)-w_n(x)\le \phi_{h_n}(x,y).
\]
Thus \(w_n\prec L+h_n\), and hence \(w_n\) is a viscosity subsolution of \(H(x,Dw)=h_n\) by the standard domination criterion for the \(N\)-body Hamilton--Jacobi equation \cite[Proposition~2.4]{10.4007/annals.2020.192.2.5}.

We next prove the supersolution property. Since \(\widehat b_{\gamma_n}\) is a normalized Busemann function, it belongs to the horofunction class considered in Proposition \ref{Maderna: exist calibrating curve on [0,+infty)}. Hence, for every \(x\in E^N\), there exists a forward curve \(\eta:[0,+\infty)\to E^N\), \(\eta(0)=x\), which calibrates \(\widehat b_{\gamma_n}\), namely
\[
\widehat b_{\gamma_n}(\eta(t))-\widehat b_{\gamma_n}(x)
=
A_{h_n}(\eta|_{[0,t]}).
\]
Define the reversed curve \(\sigma:(-\infty,0]\to E^N\) by \(\sigma(s)=\eta(-s)\). Since \(L(x,v)=L(x,-v)\), the preceding identity becomes
\[
w_n(\sigma(0))-w_n(\sigma(-t))
=
A_{h_n}(\sigma|_{[-t,0]}),
\qquad t\ge0.
\]
Thus \(w_n\) has a backward calibrating curve through every point. The usual domination--calibration argument, equivalently the supersolution criterion for calibrated dominated functions, implies that \(w_n\) is a viscosity supersolution of \(H(x,Dw)=h_n\); see \cite[Section~2.2.1]{10.4007/annals.2020.192.2.5}. Therefore \(-\widehat b_{\gamma_n}=w_n\) is a viscosity solution of \(H(x,Dw)=h_n\) on \(\Omega\).

Finally, if \(\widehat b_{\gamma_n}\to u\) locally uniformly and \(h_n\to h\), then \(w_n=-\widehat b_{\gamma_n}\to -u\) locally uniformly. Since the Hamiltonians
\[
F_n(x,p)=H(x,p)-h_n
\]
converge locally uniformly on \(\Omega\times(E^N)^*\) to \(F(x,p)=H(x,p)-h\), the stability theorem for viscosity solutions yields that \(-u\) is a viscosity solution of \(H(x,Dw)=h\) on \(\Omega\).
\end{proof}

\begin{cor}[Closedness of the fixed-shape slice]\label{cor:GRa_closed}
Let \(a\in\Omega\), and define
\[
GR(a):=\{(x,v)\in T\Omega:\ \text{the hyperbolic solution starting from }(x,v)
\text{ with limit shape }a\}.
\]
Then \(GR(a)\) is closed in the ambient space \(TE^N\).
\end{cor}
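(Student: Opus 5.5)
Take a sequence \((x_n,v_n)\in GR(a)\) with \((x_n,v_n)\to(x_0,v_0)\) in \(TE^N\). We must show that \(x_0\in\Omega\), that the solution \(\gamma\) from \((x_0,v_0)\) is hyperbolic, and that its limit shape is \(a\).

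First, since \(GR(a)\subset GR\), \Cref{thm:compactness_GR_Busemann} gives \(x_0\in\Omega\) and shows that \(\gamma\) is a geodesic ray of energy \(h=\lim h_n\). We also have \(h_n=\tfrac12\|a\|^2\) for every \(n\), because for a hyperbolic motion with limit shape \(a\) (i.e. \(\dot\gamma_n(t)\to a\)) energy conservation together with \(U(\gamma_n(t))\to 0\) gives \(h_n=\tfrac12\|a\|^2\). Hence \(h=\tfrac12\|a\|^2>0\), using \(a\in\Omega\), so \(a\neq 0\).

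Second, we identify the limiting Busemann data. By Maderna--Venturelli's fixed-shape uniqueness theorem \cite{Maderna-Venturelli-2026}, all hyperbolic geodesic rays with limit shape \(a\) at energy \(h=\tfrac12\|a\|^2\) share one Busemann function up to an additive constant. After our normalization \(\widehat b(0)=0\), this means \(\widehat b_{\gamma_n}=\widehat b_a\) for all \(n\), a single function independent of \(n\). Since the energies are all equal, \Cref{prop:precompact_busemann_data} yields a subsequence along which \(\widehat b_{\gamma_n}\to u\). The sequence is constant, so \(u=\widehat b_a\), and the same proposition says that \(\gamma\) \(h\)-calibrates \(\widehat b_a\).

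Third, we conclude that \(\gamma\) has limit shape \(a\). Here I would use the characterization, also from \cite{Maderna-Venturelli-2026} and ultimately \cite{10.4007/annals.2020.192.2.5}, that calibrating curves of the Busemann function \(\widehat b_a\) (equivalently of the viscosity solution \(-\widehat b_a\)) are hyperbolic motions with asymptotic velocity \(a\).

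This last step is the main obstacle. If that characterization is not available as stated, I would argue directly. By Theorem \ref{thm:BM_asymptotics}, \(\gamma(t)=a't+O(t^{2/3})\) with \(\tfrac12\|a'\|^2\le h\). Calibration gives \(\widehat b_a(\gamma(t))-\widehat b_a(x_0)=A_h(\gamma|_{[0,t]})\). The Busemann function of a ray with shape \(a\) grows like \(\langle a,x\rangle+o(\|x\|)\) along rays. Comparing this growth with the action, which is at least \(\sqrt{2h}\cdot\)(JM length) \(\ge \|a\|\,\|a'\|t(1+o(1))\), forces \(\langle a,a'\rangle\ge\|a\|\|a'\|\) together with \(\|a'\|=\|a\|\). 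Hence \(a'=a\in\Omega\), so \(\gamma\) is hyperbolic with limit shape \(a\).

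An alternative route is available if Chazy-type continuity applies at the limit. Once \(\gamma\) is known to be hyperbolic, the openness of the hyperbolic set and the continuity of the limit-shape map \cite[Lemma 4.1]{10.4007/annals.2020.192.2.5} give \(a=\lim a_n=a'\) directly. The difficulty is proving hyperbolicity of \(\gamma\) in the first place, which is exactly what the Busemann calibration argument above supplies.
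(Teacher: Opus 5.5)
Your main line is correct and is essentially the paper's own proof: Proposition~\ref{prop:precompact_busemann_data} shows that the limit ray $\gamma$ $h$-calibrates the limit $u$ of the $\widehat b_{\gamma_n}$, the Maderna--Venturelli fixed-shape uniqueness theorem identifies every $\widehat b_{\gamma_n}$ (hence $u$) with the single function $\widehat b_a$, and their characterization of calibrating curves of $\widehat b_a$ as hyperbolic rays with limit shape $a$ finishes the argument. Your explicit check that $h_n=\tfrac12\|a\|^2$ for all $n$, so that uniqueness is applied at one fixed energy, makes precise a point the paper leaves implicit. The fallback ``direct'' argument is not needed, and as sketched it would still require a proof of $\widehat b_a(x)=\langle a,x\rangle+o(\|x\|)$ along $\gamma$ together with the energy identity $\|a'\|^2=2h$, since the comparison alone only gives $a'=\lambda a$ with $\lambda\ge 0$.
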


\begin{proof}
Let \((x_n,v_n)\subset GR(a)\) and assume that
\[
(x_n,v_n)\to (x_0,v_0)\qquad\text{in }TE^N.
\]
For each \(n\), let \(\gamma_n\) be the corresponding hyperbolic geodesic ray.

By Proposition \ref{prop:precompact_busemann_data}, after passing to a subsequence, the
functions \(\widehat b_{\gamma_n}\) converge locally uniformly to some function \(u\), and the
geodesic ray \(\gamma\) with initial datum \((x_0,v_0)\) \(h\)-calibrates \(u\).

On the other hand, Based on \cite[Theorem 2.1 and Lemma 3.1]{Maderna-Venturelli-2026},
\(\{\widehat b_{\gamma_n}=\widehat b_a\}\) are the same and the normalized hyperbolic viscosity solution with limit shape $a$ and so is $u$.

Therefore according to Maderna--Venturelli\cite[Definition 2.3, Remark 2.5]{Maderna-Venturelli-2026}, that
every calibrating curve of \(u\) is a hyperbolic geodesic
ray with limit shape \(a\). Consequently, \(\gamma\) is a hyperbolic geodesic ray with limit
shape \(a\), that is,
\[
(x_0,v_0)\in GR(a).
\]
Thus \(GR(a)\) is closed in \(TE^N\).
\end{proof}

\begin{rem}

By Maderna--Venturelli's fixed-shape uniqueness theorem \cite{Maderna-Venturelli-2026}, 
\(\widehat b_{\gamma_n}\) is nothing but
the
well-defined normalized Busemann function associated with \(a\), which can be denoted by
\(-b_a\)
 in the sign convention of \cite{Maderna-Venturelli-2026}.

\end{rem}

\section{Geometric measure structure of the fixed-shape slice}\label{sec:geometric}

The compactness results proved above were formulated in the full configuration space \(E^N\) and in the ambient phase space \(TE^N\). In this section, in order to state dimension and rectifiability properties in the same framework as Berti--Polimeni--Terracini \cite{Terracini-2025}, 
we pass to the center-of-mass frame and work on the reduced configuration space $X$ with dimension $d(N-1)$.
\[
X:=\left\{x=(r_1,\dots,r_N)\in E^N:\ \sum_{i=1}^N m_i r_i=0\right\}.
\]
We write
\[
\Omega_X:=\Omega\cap X,
\qquad
\Delta_X:=X\setminus\Omega_X.
\]

Fix a collision-free configuration \(a\in\Omega_X\) and let
\[
h:=\frac12\|a\|^2.
\]
We have denoted by \(GR_X(a)\) the fixed-shape slice in the reduced phase space. 
This is the reduced analogue of the fixed-shape slice 
considered in the full space. By Maderna--Venturelli's 
uniqueness theorem, the collision-free configuration \(a\) 
determines a canonical Busemann function \(b_a\), which is 
a hyperbolic viscosity solution of the stationary Hamilton--Jacobi 
equation at energy \(h\), see \cite{Maderna-Venturelli-2026}. 
The set \(GR_X(a)\) may therefore be interpreted as the 
phase-space slice of initial data whose corresponding 
rays are calibrated by this fixed-shape Hamilton--Jacobi 
boundary datum.

Following \cite{Maderna-Venturelli-2026}, for \(\alpha\in(0,1)\) and \(r>0\) we set
\[
C_a(\alpha)=\{x\in X:\ \langle x,a\rangle\ge \alpha\|x\|\|a\|\},
\qquad
C_a(\alpha,r)=C_a(\alpha)\setminus B(r),
\]
and we use the notation
\[
TC_a(\alpha,r):=C_a(\alpha,r)\times X.
\]
The following proposition is a geometric reformulation of the theorem of the cone together with the \(C^1\)-regularity of the limit-shape map proved by Maderna--Venturelli.

\begin{prop}[Local graph structure on a cutted cone]\label{prop:GRa_local_graph}
There exist \(\alpha\in(\alpha_0(a),1)\), \(r>0\), and a \(C^1\) map
\[
V_a:C_a(\alpha,r)\longrightarrow X
\]
such that
\[
GR_X(a)\cap TC_a(\alpha,r)
=
\Graph(V_a|_{C_a(\alpha,r)})
:=
\{(x,V_a(x)):\ x\in C_a(\alpha,r)\}.
\]
In particular, this local piece of \(GR_X(a)\) is an \(d(N-1)\)-dimensional \(C^1\) embedded submanifold of \(TX\).
\end{prop}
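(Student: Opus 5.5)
The plan is to read off the graph structure from two results of Maderna--Venturelli \cite{Maderna-Venturelli-2026}: the theorem of the cone, and the \(C^1\)-regularity of the limit-shape map on the hyperbolic region. We identify \(V_a\) with the gradient of the fixed-shape Busemann function.

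\textbf{Step 1: defining \(V_a\).} By the fixed-shape uniqueness theorem, the collision-free configuration \(a\) determines a unique normalized Busemann function \(b_a\). It is a viscosity solution of \(H(x,Du)=h\) with \(h=\frac12\|a\|^2\). The theorem of the cone gives \(\alpha_0(a)\in(0,1)\) such that, for every \(\alpha\in(\alpha_0(a),1)\) and \(r\) large enough, two things hold on the cutted cone \(C_a(\alpha,r)\) (read inside \(X\)):
\begin{itemize}
\item \(b_a\) is differentiable there, in fact of class \(C^1\);
\item from each \(x\in C_a(\alpha,r)\) issues exactly one calibrating curve of \(b_a\). It is the hyperbolic ray with initial velocity \(\nabla b_a(x)\), up to the sign convention, and its limit shape is \(a\).
\end{itemize}
We then set \(V_a(x):=\nabla b_a(x)\), the gradient taken with respect to the mass inner product. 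It takes values in \(X\) because \(b_a\) is translation-equivariant in the center-of-mass frame.

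\textbf{Step 2: the set equality.} For the inclusion \(\supseteq\), take \((x,V_a(x))\). It is initial data of a calibrating curve of a dominated function. By Remark~\ref{rem:calibrating_implies_hmin} it is a geodesic ray, and it has limit shape \(a\).

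For \(\subseteq\), take \((x,v)\in GR_X(a)\) with \(x\) in the cone. The ray \(\gamma\) starting from \((x,v)\) is hyperbolic with limit shape \(a\). As in the proof of Corollary~\ref{cor:GRa_closed}, its normalized Busemann function coincides with \(b_a\), so \(\gamma\) calibrates \(b_a\). At a point of differentiability, a calibrating curve of a dominated function has initial velocity equal to \(\nabla b_a(x)\). This is the standard weak KAM fact that the Fenchel equality in \(b_a(\gamma(t))-b_a(x)=A_h(\gamma|_{[0,t]})\) forces \(\dot\gamma(0)=\nabla b_a(x)\). Hence \(v=V_a(x)\).

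\textbf{Step 3: regularity of \(V_a\).} I would argue through the implicit function theorem. The hyperbolic region \(\mathcal H\subset T\Omega_X\) is open by Chazy's lemma. The limit-shape map \(\mathcal A:\mathcal H\to X\) is \(C^1\) by \cite{Maderna-Venturelli-2026}. So \(GR_X(a)\cap\mathcal H=\mathcal A^{-1}(a)\).

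Near any point \((x,V_a(x))\) with \(x\) in the cone, it then suffices that \(\partial_v\mathcal A\) is invertible there. This invertibility is the transversality content of the cone theorem: since \(b_a\) is differentiable on the cone, rays of shape \(a\) through nearby points are unique and vary continuously. If invertibility is not stated directly, I would try to deduce it from the \(C^1\) dependence and the asymptotic linearity \(\mathcal A(x,v)\approx v+O(\text{small})\) for \(x\) deep in the cone, taking \(r\) large.

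With invertibility in hand, the implicit function theorem gives that \(V_a\) is \(C^1\). Its graph is then an embedded submanifold of dimension \(\dim X=d(N-1)\), as a \(C^1\) graph over an open subset of \(X\) (the cone minus a closed ball has nonempty interior in \(X\)).

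\textbf{Main obstacle.} I expect the hardest step to be this regularity and transversality step, that is, upgrading differentiability of \(b_a\) to \(C^1\) of \(V_a\). If needed, I would shrink to a larger \(\alpha\) and a larger \(r\) to make \(\partial_v\mathcal A\) a small perturbation of the identity.
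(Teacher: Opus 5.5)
Your Steps 1--2 are sound, and they identify the slice differently from the paper. You set \(V_a=\nabla b_a\), using differentiability of the fixed-shape Busemann function on the cone, and obtain the set equality from the Fenchel equality for calibrating curves. The paper instead imports a ready-made \(C^1\) field \(V_a\) from the proof of Proposition~5.1 in Maderna--Venturelli, where it is built from the \(C^1\) limit-shape map of their Section~4.3. It then matches the graph of that field with \(GR_X(a)\cap TC_a(\alpha,r)\) using the uniqueness in the theorem of the cone.

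\textbf{The gap: Step~3.} This step carries the real content of the statement, namely that the slice is a \(C^1\) graph. You never justify that \(\partial_v\mathcal A\) is invertible at \((x,V_a(x))\).
Uniqueness of the shape-\(a\) ray through each point of the cone, and its continuous dependence on the base point, are \(C^0\) facts. They give continuity of \(V_a\), which you already have from \(b_a\in C^1\). They say nothing about nondegeneracy of a derivative: \(F(x,v)=v^3-x\) is smooth, yet \(F=0\) has the unique continuous solution \(v=x^{1/3}\), which is not \(C^1\).
Likewise, \(\mathcal A(x,v)\approx v\) is only a \(C^0\) statement. What you need is \(\|\partial_v\mathcal A(x,v)-I\|\to 0\) uniformly for \(x\in C_a(\alpha,r)\) and \(v\) near \(V_a(x)\), as \(r\to\infty\). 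That requires estimating the linearized flow \(\ddot\xi=D^2U(\gamma(t))\xi\), \(\xi(0)=0\), \(\dot\xi(0)=w\), along the motion. For instance, \(|\xi(t)|\lesssim t|w|\) together with \(\|D^2U(\gamma(t))\|\lesssim (r+ct)^{-3}\) would give \(\partial_v\mathcal A-I=O(1/r)\). The bare \(C^1\)-regularity of \(\mathcal A\) does not supply this.
As written, Step~3 therefore yields only a continuous graph. You can fix this in one of two ways:
\begin{enumerate}
\item prove the derivative estimate above yourself; or
\item cite the Maderna--Venturelli construction of the \(C^1\) field, as the paper does.
\end{enumerate}
In the second case, your Steps~1--2 become an alternative proof that its graph is exactly the slice and that the field equals \(\nabla b_a\).

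\textbf{A smaller point.} With \(GR_X(a)\) read as geodesic-ray data, as your Step~2 (and the paper) requires, the identity \(GR_X(a)\cap\mathcal H=\mathcal A^{-1}(a)\) is false. The fiber \(\mathcal A^{-1}(a)\) also contains hyperbolic motions of limit shape \(a\) that are not minimizing from time \(0\); Maderna--Venturelli only give eventual minimality. For \(N=2\), a nearly back-scattered Kepler hyperbola with outgoing velocity \(a\) can have its incoming leg inside \(C_a(\alpha,r)\). So the implicit function theorem may only be used locally near \((x_0,V_a(x_0))\). There, uniqueness in the implicit-function neighbourhood, together with continuity of \(V_a\), identifies the implicit branch with \(V_a\). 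That suffices, but it is the argument you need to make.
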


\begin{proof}
By the theorem of the cone, for every \(\alpha\in(\alpha_0(a),1)\) there exists \(r_0>0\) such that, for each \(x\in C_a(\alpha,r_0)\), there is a unique hyperbolic geodesic ray starting at \(x\) and having limit shape \(a\); see \cite[Theorem~3.1]{Maderna-Venturelli-2026}.

On the other hand, in the proof of Proposition~5.1 of \cite{Maderna-Venturelli-2026}, using the \(C^1\)-regularity of the limit-shape map established in Section~4.3 of that paper, the authors construct a \(C^1\) map
\[
V_a:C_a(\alpha,r_1)\to X
\]
for some \(r_1\ge r_0\), such that for every \(x\in C_a(\alpha,r_1)\), the hyperbolic motion with initial datum \((x,V_a(x))\) has limit shape \(a\), remains in the cone, and is the unique hyperbolic motion with these properties. Increasing the radius if necessary, and applying the theorem of the cone, this motion is the unique hyperbolic geodesic ray with limit shape \(a\) starting from \(x\). Hence
\[
GR_X(a)\cap TC_a(\alpha,r_1)=\Graph(V_a|_{C_a(\alpha,r_1)}).
\]
Renaming \(r_1\) as \(r\) gives the asserted identity.

Finally, the graph map
\[
\Psi:C_a(\alpha,r)\to TX,
\qquad
\Psi(x):=(x,V_a(x)),
\]
is of class \(C^1\). Its differential is injective at every point because the first component is the identity. Therefore \(\Graph(V_a|_{C_a(\alpha,r)})\) is an \(d(N-1)\)-dimensional \(C^1\) embedded submanifold of \(TX\).
\end{proof}

\begin{thm}[Rectifiability and Hausdorff dimension of the fixed-shape calibrated slice]\label{thm:GRa_geom_{d(N-1)}easure}
The fixed-shape slice \(GR_X(a)\) is countably \(d(N-1)\)-rectifiable in \(TX\). Moreover,
\[
\dim_H GR_X(a)= d(N-1).
\]
More precisely, for every compact set \(K\Subset C_a(\alpha,r)\) with \(\mathcal L^{d(N-1)}(K)>0\),
\[
0<\mathcal H^{d(N-1)}\bigl(\Graph(V_a|_K)\bigr)<+\infty.
\]
\end{thm}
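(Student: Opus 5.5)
The plan is to globalize the local graph structure of Proposition \ref{prop:GRa_local_graph} along the Hamiltonian flow. Let $\Phi^t$ denote the (partial) Newtonian flow on $T\Omega_X$; it is smooth wherever it is defined. Take $(x,v)\in GR_X(a)$ with ray $\gamma$. Since $\gamma$ is hyperbolic with limit shape $a$, we have $\gamma(t)=at+o(t)$. Hence $\gamma(t)/\|\gamma(t)\|\to a/\|a\|$ and $\|\gamma(t)\|\to\infty$, so there is an integer time $k$ with $\gamma(k)\in C_a(\alpha,r)$. The time-shifted ray $\gamma(\cdot+k)$ is again a geodesic ray with limit shape $a$, so $\Phi^k(x,v)\in GR_X(a)\cap TC_a(\alpha,r)=\Graph(V_a)$. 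Conversely, if $\Phi^k(x,v)\in\Graph(V_a)$, then the forward motion from $(x,v)$ is hyperbolic with limit shape $a$. I will need the ray property on $[0,k]$ as well, so I do not claim equality here. This gives the covering
\[
GR_X(a)\subset\bigcup_{k\in\mathbb N}\Phi^{-k}\bigl(\Graph(V_a)\bigr),
\]
where $\Phi^{-k}$ is the backward flow, defined on the open subset $D_k\subset\Graph(V_a)$ of data whose backward orbit stays collision-free for time $k$.

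Next, $\Graph(V_a)$ is a $C^1$ submanifold of dimension $d(N-1)$, so it is a countable union of $C^1$ graphs over compact pieces $K_j\Subset C_a(\alpha,r)$. The map $\Phi^{-k}\circ\Psi$ is $C^1$ on the open set $\Psi^{-1}(D_k)$, hence locally Lipschitz. Exhausting $\Psi^{-1}(D_k)$ by countably many compacts, each image $\Phi^{-k}(\Psi(K))$ is a Lipschitz image of a compact subset of $\mathbb R^{d(N-1)}$. Therefore $GR_X(a)$ is contained in a countable union of Lipschitz images, and a subset of a countably rectifiable set is countably rectifiable. This gives rectifiability, together with $\mathcal H^{d(N-1)}$ $\sigma$-finiteness and hence $\dim_H\le d(N-1)$.

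For the lower bound and the finer claim, fix $K\Subset C_a(\alpha,r)$ compact with $\mathcal L^{d(N-1)}(K)>0$. The map $\Psi=(\mathrm{id},V_a)$ is $C^1$ on a neighborhood of $K$, so it is Lipschitz on $K$ with some constant $L$. This gives $\mathcal H^{d(N-1)}(\Psi(K))\le L^{d(N-1)}\mathcal H^{d(N-1)}(K)<\infty$. The projection $\pi(x,v)=x$ is $1$-Lipschitz with $\pi\circ\Psi=\mathrm{id}$, so $\mathcal H^{d(N-1)}(\Psi(K))\ge\mathcal H^{d(N-1)}(K)=c\,\mathcal L^{d(N-1)}(K)>0$, using that $\mathcal H^n$ is a multiple of Lebesgue measure on $X\cong\mathbb R^{d(N-1)}$ with the mass norm. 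Since $\Psi(K)\subset GR_X(a)$ by Proposition \ref{prop:GRa_local_graph}, we get $\dim_H GR_X(a)\ge d(N-1)$. Such a $K$ exists because $C_a(\alpha,r)$ has nonempty interior.

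The main obstacle I expect is the first step, namely making sure every $(x,v)\in GR_X(a)$ eventually enters the cut cone at a discrete time. This requires the Burgos--Maderna asymptotics (Theorem \ref{thm:BM_asymptotics}) with the limit shape equal to $a$, together with the invariance of $GR_X(a)$ under forward time shifts. If the cone condition requires $\alpha$ close to $1$, the argument still works, since $\gamma(t)/\|\gamma(t)\|\to a/\|a\|$. The only subtlety is the domain of the backward flow, which I handle through the open sets $D_k$ and a countable compact exhaustion.
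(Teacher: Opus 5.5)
Your proposal is correct and follows essentially the same route as the paper. You flow each point of $GR_X(a)$ forward by an integer time into the cone graph $\Graph(V_a)$, cover $GR_X(a)$ by the backward images $\Phi^{-k}(\Graph(V_a))$ exhausted by compacts to get countably many Lipschitz images, and use a graph patch over a positive-measure compact $K$ for the lower bound. The only deviation is in the two-sided bound on $\mathcal H^{d(N-1)}(\Graph(V_a|_K))$: you use the Lipschitz-image estimate together with the $1$-Lipschitz projection satisfying $\pi\circ\Psi=\mathrm{id}$, whereas the paper uses the area formula with Jacobian $\sqrt{\det(I+DV_a^\ast DV_a)}\ge 1$. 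Your version is slightly more elementary and gives the same bounds.
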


\begin{proof}
Set
\[
G_a:=\Graph(V_a|_{C_a(\alpha,r)})\subset TX,
\]
where \(\alpha,r\) and \(V_a\) are given by Proposition~\ref{prop:GRa_local_graph}. Let \(\Phi_t\) denote the Hamiltonian flow on \(TX\).

\smallskip
\noindent\textbf{Step 1. Countable rectifiability.}
We claim that
\[
GR_X(a)\subset \bigcup_{n=0}^{\infty}\Phi_{-n}(G_a).
\]
Indeed, let \((x,v)\in GR_X(a)\), and let \(\gamma:[0,+\infty)\to\Omega_X\) be the corresponding
 hyperbolic geodesic ray. 
 
Since \(\gamma\) has limit shape \(a\), we have
\[
\gamma(t)=ta+o(t)\qquad\text{as }t\to+\infty,
\]
then

\[
\lim_{t\to+\infty} \left\langle\frac{\gamma(t)}{\|\gamma(t)\|}, \frac{a}{\|a\|}\right\rangle  = 1
\]
On the otherhand, since $\alpha\in(\alpha_0(a),1)$, there exists  $T>0$ such that $\langle\gamma(t),a\rangle\geq\alpha\|\gamma(t)\|\|a\|$ for all $t\ge T$.
Hence 
\[
\gamma(t)\in C_a(\alpha,r)
\qquad\text{for all }t\ge T.
\]

Choose an integer \(n\ge T\). Then \(\gamma|_{[n,+\infty)}\) is a hyperbolic geodesic ray starting from \(\gamma(n)\in C_a(\alpha,r)\) and having limit shape \(a\). By Proposition~\ref{prop:GRa_local_graph}, it must coincide with the unique ray issued from \(\gamma(n)\), so
\[
\dot\gamma(n)=V_a(\gamma(n)).
\]
Thus
\[
\Phi_n(x,v)=(\gamma(n),\dot\gamma(n))=(\gamma(n),V_a(\gamma(n)))\in G_a,
\]
and the claim follows.

For each \(n\in\mathbb N\cup\{0\}\), let
\[
U_n:=\{x\in C_a(\alpha,r):\ \Phi_{-n}(x,V_a(x))\ \text{is nonempty}\},
\]
and define
\[
F_n:U_n\to TX,
\qquad
F_n(x):=\Phi_{-n}(x,V_a(x)).
\]
Since \(\Phi_{-n}\) is smooth on its domain of definition and \(V_a\) is \(C^1\), each \(F_n\) is \(C^1\). 
Choose a linear isomorphism \(L:\mathbb R^{d(N-1)}\to X\), set \(E_n:=L^{-1}(U_n)\), 
and write \(f_n:=F_n\circ L:E_n\to TX\). For every \(\ell\in\mathbb N\), put compact sets
\[
K_{n,\ell}:=\left\{z\in E_n:\ |z|\le \ell,\ \operatorname{dist}(z,\mathbb R^{d(N-1)}\setminus E_n)\ge \frac1\ell\right\}.
\]
Then \(E_n=\bigcup_{\ell=1}^\infty K_{n,\ell}\), and each restriction \(f_n|_{K_{n,\ell}}\) 
is Lipschitz because \(f_n\) is \(C^1\). Hence
\[
F_n(U_n)\subset \bigcup_{\ell=1}^{\infty} f_n(K_{n,\ell}),
\]
and therefore
\[
GR_X(a)
\subset
\bigcup_{n=0}^{\infty}\bigcup_{\ell=1}^{\infty} f_n(K_{n,\ell}).
\]
Thus \(GR_X(a)\) is contained in a countable union of Lipschitz images of subsets of \(\mathbb R^{d(N-1)}\), and hence is countably \(d(N-1)\)-rectifiable.

\smallskip
\noindent\textbf{Step 2. Positive and finite local \(d(N-1)\)-measure on graph patches.}
Let
\[
\Psi:C_a(\alpha,r)\to TX,
\qquad
\Psi(x):=(x,V_a(x)).
\]
For a compact set \(K\Subset C_a(\alpha,r)\), the area formula gives
\[
\mathcal H^{d(N-1)}(\Psi(K))
=
\int_K J_{d(N-1)}\Psi(x)\,d\mathcal L^{d(N-1)}(x),
\]
where
\[
J_{d(N-1)}\Psi(x)=\sqrt{\det\!\bigl(I+(DV_a(x))^\ast DV_a(x)\bigr)}.
\]
Since \((DV_a(x))^\ast DV_a(x)\) is positive semidefinite, \(J_{d(N-1)}\Psi(x)\ge 1\). Hence, if \(\mathcal L^{d(N-1)}(K)>0\),
\[
\mathcal H^{d(N-1)}\bigl(\Graph(V_a|_K)\bigr)=\mathcal H^{d(N-1)}(\Psi(K))\ge \mathcal L^{d(N-1)}(K)>0.
\]
On the other hand, \(DV_a\) is continuous, so \(J_{d(N-1)}\Psi\) is bounded on compact subsets; therefore \(\mathcal H^{d(N-1)}(\Graph(V_a|_K))<+\infty\).

\smallskip
\noindent\textbf{Step 3. Hausdorff dimension.}
Since \(GR_X(a)\) is countably \(d(N-1)\)-rectifiable, it is contained in a countable union of Lipschitz images of subsets of \(\mathbb R^{d(N-1)}\). Hence
\[
\dim_H GR_X(a)\le d(N-1).
\]
Conversely, choose a compact set \(K\Subset C_a(\alpha,r)\) with \(\mathcal L^{d(N-1)}(K)>0\). By Step~2,
\[
0<\mathcal H^{d(N-1)}\bigl(\Graph(V_a|_K)\bigr),
\qquad
\Graph(V_a|_K)\subset GR_X(a).
\]
Therefore \(\dim_H GR_X(a)\ge d(N-1)\). Thus
\[
\dim_H GR_X(a)= d(N-1).
\]
\end{proof}

\begin{rem}[Relation with singular-set rectifiability]
Theorem~\ref{thm:GRa_geom_measure} concerns a  phase-space object: the fixed-shape slice of 
initial data whose corresponding rays are selected by the canonical Busemann solution \(b_a\).
 While for the singular-set rectifiability theorem of Berti--Polimeni--Terracini \cite{Terracini-2025}, the object is the irregular set of a Hamilton--Jacobi value function in configuration space and has codimension one. In the present setting, the slice \(GR_X(a)\) is locally a \(C^1\) graph over configuration space and is therefore an \(d(N-1)\)-dimensional rectifiable object in phase space.
\end{rem}

\begin{rem}[Higher-dimensional measures]
Since \(GR_X(a)\) is countably \(d(N-1)\)-rectifiable, one has \(\mathcal H^q(GR_X(a))=0\) for every \(q> d(N-1)\). 
In particular, if \(\Sigma_h:=\{(x,v)\in TX:\ H(x,v)=h\}\), then \(\mathcal H^{\dim\Sigma_h}(GR_X(a))=0\) since $d\geq2$ and \(\dim \Sigma_h=2d(N-1)-1\). 
Thus \(GR_X(a)\) has positive size in its natural dimension \(d(N-1)\), but zero surface measure in the higher-dimensional energy shell.
\end{rem}

\section*{Appendix}\label{appendix}

Before proving the proposition \ref{Maderna: exist calibrating curve on [0,+infty)}, we need 
the famous von Zeipel's theorem, 
that
 if the solution $x(t)$ has a singularity at $t^*$, 
 then $\lim_{t\rightarrow t^*} I(x(t)) = I^*$ exists and lies in $[0, +\infty]$.
 In particular, in the case of  $\lim_{t\rightarrow t^*}I(x(t)) = I^*<+\infty$, 
 there exists a $x^* \in \Delta$ such that $\lim_{t \rightarrow t^*}x(t) = x^*$, 
 i.e., $x(t)$ suffers a collision.
 In the case of $I^* = +\infty$, the particles of $x(t)$ are said to undergo a pseudocollision.
 See von Zeipel \cite{von1908singularites} and McGehee \cite{zbMATH04009869} for more detail.

\begin{proof}[Proof of proposition \ref{Maderna: exist calibrating curve on [0,+infty)}]
    We first show that for any $r>0$ there exist some $y_r\in E^N$ and a curve $\gamma_r\in AC (x,y_r)$ defined in a finite interval, 
    such that $\|x-y_r\|=r$ and $u(y_r) - u(x)= A_h (\gamma_r)$.
    
    Denote $\phi_{h_n}(\cdot,p_n)$ by $v_n(\cdot)$ as a function  in $E^N$, then it is continuous since $\phi_{h_n}$ is a distance.
    According to Proposition \ref{free-time minimizer of phi_h exist}, the action $\phi_{h_n}(x,p_n)$ attains its minimum in $ AC (x,p_n)$ 
    at some $\gamma_n$ defined over $[0,t_n]$ with $\gamma_n(t_n) = p_n$, then for any $t\in[0, t_n]$,
    \begin{equation*}
        v_n(\gamma(t))-v_n(p_n) = \phi_{h_n}(\gamma(t),p_n)= A_{h_n} \left(\gamma_n\vert_{[t,t_n]}\right),
    \end{equation*}
    which implies that for any subinterval $[t_1, t_2]\subset [0, t_n)$, 
    \begin{align*}
        &v_n(\gamma(t_1)) - v_n(\gamma(t_2)) \\
        = &\big(v_n(p_n)-v_n(\gamma(t_2))\big) - \big(v_n(p_n)-v_n(\gamma(t_1))\big)    \\
        = & -A_{h_n} \left(\gamma\vert_{[t_2,t_n]}\right) + A_{h_n} \left(\gamma\vert_{[t_1,t_n]}\right)\\
        = & A_{h_n} \left(\gamma\vert_{[t_1,t_2]}\right).
    \end{align*}
    Now because $\|p_n\|\rightarrow+\infty$, suppose $\|p_n-x\|>r$, then for each $n$, there is a $y_n=\gamma(\tau_n)$ for some $\tau_n$  such that $\|y_n-x\|=r$, 
    thus we assume $y_n\rightarrow y_r$ for some  $y_r\in\partial B(x,r)$.
    Since $\gamma_n$  is a calibrating curve, the following equation holds:
    \begin{equation*}
     v_n(x) - v_n(y_n)=  A_{h_n} \left(\gamma_n\big\vert_{[0,\tau_n]}\right) =\phi_{h_n}(x,y_n).
    \end{equation*}   
    On the other hand, since $\phi_{h_n}$ is distance, 
    then $\phi_{h_n}(y_n,p_n)\leq\phi_{h_n}(y_n,y_r)+\phi_{h_n}(y_r,p_n)$ and $\phi_{h_n}(y_r,p_n)\leq\phi_{h_n}(p_n,y_n)+\phi_{h_n}(y_n,y_r)$
    implies $\vert v_n(y_n)-v_n(y_r)\vert\leq\phi_{h_n}(y_r,y_n)$, then
    we apply Proposition \ref{uniform bound for phi_hn} to have $\vert v_n(y_n)-v_n(y_r)\vert \leq\mu(\|y_n-y_r\|)\rightarrow 0$ as $n\rightarrow\infty$.
 
    On the other hand, since $\phi_{h_n},\phi_h$ are distances in $E^N$ induced by conformal metrics and $h_n\rightarrow h$, thus $\lim_n\phi_{h_n}(x,y_r) = \phi_h(x,y_r)$,
    and by Proposition \ref{uniform bound for phi_hn}, we have 
    \begin{equation}
        |\phi_{h_n}(x,y_r) - \phi_{h_n}(x,y_n)| \leq\phi_{h_n}(y_r,y_n) \leq\mu\|y_r-y_n\|\rightarrow 0,
    \end{equation}    
    and then
    \begin{equation}
        |\phi_h(x,y_r) - \phi_{h_n}(x,y_n)|\leq|\phi_h(x,y_r) -\phi_{h_n}(x,y_r)| + |\phi_{h_n}(x,y_r) - \phi_{h_n}(x,y_n)|\rightarrow 0,
    \end{equation}
   i.e., $\lim_n\phi_{h_n}(x,y_n) = \phi_h(x,y_r)$.
    Therefore
 \begin{align*}
     u(y_r) - u(x)&=\lim_n\bigl(v_n(x) - v_n(y_r)\bigr)\\
              &=\lim_n\bigl(v_n(x) - v_n(y_n)+v_n(y_n) - v_n(y_r)\bigr)\\
              &=\lim_n\bigl(\phi_{h_n}(x,y_n)+v_n(y_n)-v_n(y_r)\bigr)\\
              &=\phi_h(x, y_r),
 \end{align*}
 we again apply Proposition \ref{free-time minimizer of phi_h exist} to take a minimizer $\gamma_r\in AC (x,y_r)$ 
 such that $ A_h (\gamma_r) = \phi_h(x,y_r) = u(y_r) - u(x)$.
 
 Now we apply Zorn's Lemma to obtain a maximal calibrating curve $\gamma: [0,t^*)\rightarrow E^N$, then we try to prove $t^* = +\infty$.
 We argue by contradiction. Let $t^*\neq+\infty$, $\gamma$ is a free-time minimizer thus it is a true motion in $[0, t^*)$, 
 the maximal property of $t^*$ implies it is a singularity. 
 Applying von Zeipel's theorem gives $I(\gamma(t))\rightarrow I^*\in[0,+\infty]$,
 If $I^*<+\infty$, Zeipel's theorem tells us $\lim_{t\rightarrow t^*}\gamma(t)$ exists in finite position, 
 then we pick another calibrating curve $\eta$ defined in some $[t^*, t^*+\sigma)$ $(\sigma>0)$ with $\eta(t^*) = \lim_{t\rightarrow t^*}\gamma(t)$, 
 concatenating $\gamma, \eta$ produces a new calibrating curve defined in $[0,t^*+\sigma)$, a contradiction.
 If $I^* = +\infty$, we choose a sequence $x_n = \gamma(t_n), (t_n\rightarrow t^{*-})$ such that $\|x_n - x\|\rightarrow+\infty$ 
 and denote
 \[
 B_n:=A_h\left(\gamma\big\vert_{[0,t_n]}\right).
 \]
 Since \(t_n\le t^*<+\infty\), the Cauchy--Schwarz inequality gives
 \begin{equation}
     \|x_n-x\|^2
     =\left\|\int_0^{t_n}\dot\gamma(s)\,ds\right\|^2
     \le t_n\int_0^{t_n}\|\dot\gamma(s)\|^2\,ds
     \le 2t_n B_n.
 \end{equation}
 Because \(\gamma\) is calibrated, it is an \(h\)-minimizer; hence
 \[
 B_n=A_h\left(\gamma\big\vert_{[0,t_n]}\right)=\phi_h(x,x_n).
 \]
 By Proposition \ref{uniform bound for phi_hn}, with the energy fixed at \(h\), there is a modulus \(\mu\) such that
 \[
 \phi_h(x,x_n)\le \mu(\|x-x_n\|),
 \]
 where \(\mu(r)=O(r)\) if \(h>0\) and \(\mu(r)=O(r^{1/2})\) if \(h=0\). Therefore
 \[
 \frac{\|x_n-x\|^2}{2t_n}
 \le B_n
 =\phi_h(x,x_n)
 \le \mu(\|x-x_n\|).
 \]
 Since \(t_n\le t^*<+\infty\) while \(\|x_n-x\|\to+\infty\), the left-hand side grows quadratically in \(\|x_n-x\|\), whereas the right-hand side grows at most linearly. This is impossible.
 
 \end{proof}

\section{Acknowledgement}
This work is partially supported by National Natural Science Foundation of China (Grant No.12071316).

\end{document}